\theoremstyle{plain}
\newtheorem{prop}{Proposition}
\newtheorem{propdef}[prop]{Proposition/Definition}
\newtheorem*{prop*}{Proposition}
\newtheorem{thm}[prop]{Theorem}
\newtheorem*{thm*}{Theorem}
\newtheorem{cor}[prop]{Corollary}
\newtheorem*{cor*}{Corollary}
\newtheorem{lem}[prop]{Lemma}
\newtheorem*{convention*}{Convention}
\newtheorem*{question*}{Question}
\theoremstyle{definition}
\newtheorem*{defn*}{Definition}
\newtheorem{defn}[prop]{Definition}
\newtheorem{rem}[prop]{Remark}
\newtheorem*{rem*}{Remark}
\newtheorem*{scholium*}{Scholium}
\newtheorem*{ack*}{Acknowledgements}
\newtheorem*{example*}{Example}
\numberwithin{equation}{section}
\newcommand{\fhi}{\varphi}
\newcommand{\ro}{\varrho}
\newcommand{\CC}{\mathbf{C}}
\newcommand{\NN}{\mathbf{N}}
\newcommand{\RR}{\mathbf{R}}
\newcommand{\sF}{\mathscr{F}}
\newcommand{\sL}{\mathscr{L}}
\newcommand{\ru}{\mathrm{C}_\mathrm{b}^\mathrm{ru}}
\newcommand{\lu}{\mathrm{C}_\mathrm{b}^\mathrm{lu}}
\newcommand{\cb}{\mathrm{C}_\mathrm{b}}
\newcommand{\cc}{\mathrm{C}}
\newcommand{\lw}{L^\infty_{\mathrm{w}*}}
\newcommand{\inv}{^{-1}}
\newcommand{\se}{\subseteq}
\newcommand{\lra}{\longrightarrow}
\newcommand{\acts}{\curvearrowright}
\newcommand{\one}{\boldsymbol{1}}
\newcommand{\id}{\,\mathrm{id}}
\newcommand{\centra}{\mathscr{Z}}
\DeclareMathOperator{\ulim}{ulim}
\DeclareMathOperator{\Ramen}{Ramen}
\newcommand{\cx}{\mathscr{X}}
\begin{document}
\title{Relative amenability}
\author[P.-E. Caprace]{Pierre-Emmanuel Caprace*}
\address{UCL -- Math, Chemin du Cyclotron 2, 1348 Louvain-la-Neuve, Belgium}
\email{pe.caprace@uclouvain.be}
\thanks{* F.R.S.-FNRS research associate. Supported in part by the ERC}
\author[N. Monod]{Nicolas Monod$^\ddagger$}
\address{EPFL, 1015 Lausanne, Switzerland}
\email{nicolas.monod@epfl.ch}
\thanks{$^\ddagger$Supported in part by the Swiss National Science Foundation and the ERC}
\date{October 2013}
\keywords{Amenability, subgroups, Chabauty topology, approximate identity}
\begin{abstract}
We introduce a relative fixed point property for subgroups of a locally compact group, which we call \emph{relative amenability}. It is a priori weaker than amenability. We establish equivalent conditions, related among others to a problem studied by Reiter in~1968. We record a solution to Reiter's problem.

\smallskip
We study the class~$\cx$ of groups in which relative amenability is equivalent to amenability for all closed subgroups; we prove that $\cx$ contains all familiar groups. Actually, no group is known to lie outside~$\cx$.

\smallskip
Since relative amenability is closed under Chabauty limits, it follows that any Chabauty limit of amenable subgroups remains amenable if the ambient group belongs to the vast class~$\cx$.
\end{abstract}
\maketitle

\section{Introduction}
Let $G$ be a locally compact group. Recall that a \textbf{convex compact $G$-space} is a convex compact subset of any (Hausdorff) locally convex topological vector space endowed with a continuous affine representation of $G$ preserving this set. The group $G$ is called \textbf{amenable} if it fixes a point in every non-empty convex compact $G$-space. In this paper, we do not assume $G$ to be amenable, but focus rather on the  property of amenability among closed subgroups of $G$.  To this end, we introduce the following \emph{relative} fixed point property. 

\begin{defn*}
A closed subgroup $H < G$ is called \textbf{relatively amenable} (or \textbf{amenable relative to $G$}) if 
$H$ fixes a point in every non-empty convex compact $G$-space.
\end{defn*}

Every amenable subgroup of $G$ is thus relatively amenable. Perhaps it might come as a surprise that relative amenability is formally weaker than amenability. The purpose of this paper is to elucidate the relations between these two notions.  

\medskip
One of our initial motivations to consider relative amenability came from the following question about the space $\mathscr S(G)$ of closed subgroups of $G$ endowed with the compact topology defined by Chabauty~\cite{Chabauty}. 

\begin{question*}
Is the set of amenable subgroups closed in $\mathscr S(G)$? In other words, is amenability a closed property with respect to the Chabauty topology? 
\end{question*}

This problem seems to be open; the only results for general groups that we are aware of are the almost trivial cases where the limit is either open or contains the subgroups converging to it (see Section~\ref{sec:cors}). By contrast, it is straightforward to check that relative amenability is a Chabauty-closed property (Lemma~\ref{lem:chabauty}). Let us clarify when the two notions coincide:

\begin{propdef}\label{prop:clarify}
Given a locally compact group $G$, the following properties are equivalent. 

\begin{enumerate}[label=(\roman*)]
\item Every relatively amenable subgroup is amenable. 

\item There exists a non-empty convex compact $G$-space such that the stabiliser of every point is amenable. 
\end{enumerate}
We denote by $\cx$ the class of locally compact groups satisfying these equivalent conditions.
\end{propdef}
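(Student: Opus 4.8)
The plan is to establish the two implications separately; (ii)$\Rightarrow$(i) is immediate, while for (i)$\Rightarrow$(ii) I would produce the required space as the universal convex compact $G$-space.

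\emph{(ii)$\Rightarrow$(i).} Let $K$ be a non-empty convex compact $G$-space in which every point stabiliser is amenable, and let $H<G$ be relatively amenable. Then $H$ fixes some $x\in K$, so $H\se G_x$, where $G_x=\{g\in G:gx=x\}$ is the preimage of $\{x\}$ under the continuous orbit map and hence a closed, so locally compact, subgroup of $G$. By hypothesis $G_x$ is amenable, and $H$ — being closed in $G$ — is closed in $G_x$; since a closed subgroup of an amenable locally compact group is amenable, $H$ is amenable.

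\emph{(i)$\Rightarrow$(ii).} Here I would invoke the universal convex compact $G$-space $\mathcal U$: a non-empty convex compact $G$-space that, for every non-empty convex compact $G$-space $L$, admits a continuous affine $G$-equivariant map $\mathcal U\to L$. Granting this, fix $x\in\mathcal U$; the stabiliser $G_x$ is closed in $G$, and for every non-empty convex compact $G$-space $L$ the image of $x$ under a $G$-map $\mathcal U\to L$ is a point fixed by $G_x$. Hence $G_x$ is relatively amenable, and therefore amenable by~(i). As $x$ was arbitrary and $\mathcal U$ is non-empty, $\mathcal U$ witnesses~(ii).

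It remains to construct $\mathcal U$, which is the one substantive point. I would take $\mathcal U$ to be the set of means on $\lu(G)$ with its weak-$*$ topology: a non-empty weak-$*$ compact convex subset of $\lu(G)^*$ carrying a continuous affine $G$-action, the continuity coming from the norm-continuity of the translation action of $G$ on $\lu(G)$. For the universal property, given a non-empty convex compact $G$-space $L$ in a locally convex space and a point $p\in L$, pulling back continuous affine functions on $L$ along the orbit map $g\mapsto gp$ yields a unital positive $G$-equivariant linear map from the space $A(L)$ of continuous affine functions on $L$ into $\lu(G)$ — the target being $\lu(G)$ and not merely $\cb(G)$ by compactness of $L$ together with the tube lemma, which is exactly what makes the dualised action continuous. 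Its adjoint sends $\mathcal U$ into the state space of $A(L)$, which is canonically $L$ itself (the standard fact that a compact convex subset of a locally convex space is the state space of its space of continuous affine functions), giving the sought $G$-map $\mathcal U\to L$. These two standard facts — that such pullbacks land in $\lu(G)$, and that the state space of $A(L)$ is $L$ — are the only real inputs and could equally be cited; alternatively one can bypass $\mathcal U$ by forming a product of convex compact $G$-spaces indexed by a set of test spaces witnessing the non-amenability of the various closed subgroups, once a bound on the size of such witnesses is in place.
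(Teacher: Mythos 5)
Your proof is correct and takes essentially the same route as the paper: the witness for~(ii) is the space of means on the bounded uniformly continuous functions for the translation action, and the ``universal property'' of $\mathcal U$ that you establish via pullback of affine functions along orbit maps is precisely the equivalence \ref{pt:equiv:fixed}$\Leftrightarrow$\ref{pt:equiv:mean} of Theorem~\ref{thm:equiv} (Rickert's argument), which the paper simply cites. One notational caveat: in this paper's conventions the correct space is $\ru(G)$, not $\lu(G)$ --- here $\ru(G)$ denotes the continuous vectors for the \emph{left} translation representation, and the paper explicitly warns that an $H$-invariant mean on its $\lu(G)$ detects genuine amenability rather than relative amenability --- but your construction unambiguously describes the right object, so only the label clashes.
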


The point of our first theorem is that the class~$\cx$ is very large indeed. For instance, it is almost immediate
that it contains any group \textbf{amenable at infinity}, i.e.\ admitting an amenable continuous action on some compact space~\cite{Anantharaman02}. This is the case e.g.\ for all connected groups~\cite[3.3]{Anantharaman02}, all algebraic groups over local fields, and all automorphism groups of (possibly non-Euclidean) locally finite buildings~\cite{Lecureux}. For discrete groups, it is equivalent to \textbf{exactness}~\cite{Anantharaman02, Ozawa_exact}. The only groups asserted to fail exactness are the so-called Gromov monsters~\cite{GromovRANDOM} (or groups containing them).

\begin{thm}[The class~$\cx$ is very large]\label{thm:closure}
\
\begin{enumerate}[label=(\alph*)]
\item $\cx$ contains all discrete groups.\label{pt:class:discrete}
\item $\cx$ contains all groups amenable at infinity.\label{pt:class:mai}
\item $\cx$ is closed under taking closed subgroups.\label{pt:class:sub}
\item $\cx$ is closed under taking (finite) direct products.\label{pt:class:direct}
\item $\cx$ is closed under taking adelic products.\label{pt:class:adele}
\item $\cx$ is closed under taking directed unions of open subgroups.\label{pt:class:union}
\end{enumerate}
Let $N\lhd G$ be a closed normal subgroup of a locally compact group $G$.
\begin{enumerate}[resume*]
\item If $N$ is amenable, then $G\in\cx \Longleftrightarrow G/N\in\cx$.\label{pt:class:ameneq}
\item If $N$ is connected, then $G\in\cx \Longleftrightarrow G/N\in\cx$.\label{pt:class:conneq}
\item If $N$ is open, then $G\in\cx \Longleftrightarrow N\in\cx$.\label{pt:class:open}
\item If $N$ is discrete and $G/N\in\cx$, then $G\in\cx$.\label{pt:class:td:discrete}
\item If $N$ is amenable at infinity and $G/N\in\cx$, then $G\in\cx$.\label{pt:class:td:mai}
\end{enumerate}
\end{thm}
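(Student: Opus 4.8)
The plan is to work throughout with the characterization in Proposition/Definition~\ref{prop:clarify}(ii): a group lies in $\cx$ if and only if it admits a non-empty convex compact $G$-space all of whose point stabilisers are amenable. Call such a space an \emph{$\cx$-witness}. Most parts reduce to manipulating $\cx$-witnesses functorially. For~\ref{pt:class:discrete}, the one-point space works since point stabilisers in a discrete group are open, hence... no — rather, use the space of means on $\ell^\infty(G)/\mathrm{(something)}$; more simply, for discrete $G$ take the compact convex space of finitely additive probability measures on $G$ and observe that a point stabiliser, being the stabiliser of a mean, is amenable by a classical argument (a subgroup fixing a mean on $G$ with $G$ discrete is amenable). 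For~\ref{pt:class:mai}, if $G$ acts amenably on a compact space $Z$, the space of probability measures $\mathrm{Prob}(Z)$ with the induced $G$-action... one takes instead the fibrewise construction: amenability at infinity yields a $G$-equivariant net of continuous maps $Z\to\mathrm{Prob}(G)$-type data, and one packages this into a convex compact $G$-space whose stabilisers are amenable; I expect to cite Anantharaman-Delaroche for the precise statement that point stabilisers of the relevant $G$-space are amenable.

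\textbf{The hereditary and product properties.} For~\ref{pt:class:sub}, if $K<G$ is closed and $\sF$ is an $\cx$-witness for $G$, then $\sF$ is already a convex compact $K$-space and each $K$-stabiliser is $K\cap(\text{amenable})$, hence amenable; so $K\in\cx$. For~\ref{pt:class:direct}, given $\cx$-witnesses $\sF_i$ for $G_i$ ($i=1,2$), take $\sF_1\times\sF_2$ with the product action of $G_1\times G_2$; the stabiliser of $(x_1,x_2)$ is $\mathrm{Stab}(x_1)\times\mathrm{Stab}(x_2)$, a product of amenable groups, hence amenable. Part~\ref{pt:class:adele} is the analogue for restricted products: one must check that the relevant restricted-product construction of convex compact spaces again has amenable stabilisers, using that outside a finite set the local factors are compact (hence amenable) and contribute their whole group as stabiliser; the only subtlety is topological, ensuring the resulting space is genuinely compact and the action continuous. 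For~\ref{pt:class:union}, if $G=\bigcup G_\alpha$ is a directed union of open subgroups each in $\cx$, one induces: the coinduced/induced convex compact $G$-space built from an $\cx$-witness of a cofinal $G_\alpha$ has stabilisers conjugate into $G_\alpha$-stabilisers; here openness is what makes the induction keep compactness.

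\textbf{The extension properties — the real work.} Parts~\ref{pt:class:ameneq}–\ref{pt:class:td:mai} concern a closed normal $N\lhd G$. The "easy" direction of each equivalence is: if $G\in\cx$ then $G/N\in\cx$ whenever $N$ is amenable (pull back a point fixed by $N$ — since $N$ is amenable and normal, $N$ fixes a point in any $\cx$-witness $\sF$, and the $N$-fixed-point set $\sF^N$ is a non-empty convex compact $G/N$-space with amenable stabilisers). The converse directions, and the one-directional implications~\ref{pt:class:td:discrete} and~\ref{pt:class:td:mai}, are where the main obstacle lies: given an $\cx$-witness $\sF'$ for $G/N$ and knowing $N$ is amenable (resp. amenable at infinity, resp. discrete, resp. connected), build an $\cx$-witness for $G$. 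The natural move is to pull $\sF'$ back to a convex compact $G$-space $\sF$ via $G\to G/N$; its stabilisers are the preimages in $G$ of amenable subgroups of $G/N$, i.e.\ extensions of amenable-by-$N$. To finish we must \emph{upgrade} $\sF$ so that these stabilisers become amenable, which requires an auxiliary $\cx$-witness for $N$ compatible with the $G$-action — and $N$ being merely normal, not a direct factor, means we cannot just take a product. I expect the key device to be: use amenability (resp. amenability at infinity) of $N$ to produce a $G$-equivariant "field" of $N$-witnesses over $\sF$, then integrate/combine via a convex-compactness argument (a section of a bundle of convex compact spaces over $\sF$, with the conjugation action of $G$ on the fibres), so that a point in the total space has stabiliser contained in an amenable subgroup. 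The cases~\ref{pt:class:conneq} and~\ref{pt:class:td:discrete} then follow by reducing to the amenable-by-something situation: a connected group need not be amenable, so~\ref{pt:class:conneq} must instead invoke that connected groups are amenable at infinity and apply the mechanism of~\ref{pt:class:td:mai}; and~\ref{pt:class:open} follows by combining~\ref{pt:class:union} (for the $\Leftarrow$, writing... actually $N$ open of finite-or-infinite index) with~\ref{pt:class:sub} (for $\Rightarrow$), together with the observation that $G/N$ is then discrete so its witness interacts trivially. The genuinely hard technical heart is constructing the equivariant bundle of convex compact spaces and verifying joint continuity and compactness of its space of sections; everything else is bookkeeping on stabilisers.
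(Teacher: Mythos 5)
Your treatment of the easy parts \ref{pt:class:discrete}, \ref{pt:class:sub}, \ref{pt:class:direct} via the witness characterisation of Proposition~\ref{prop:clarify}(ii) is essentially correct (and for \ref{pt:class:direct} slightly slicker than the paper, which instead projects a relatively amenable $H<G_1\times G_2$ to each factor). But there are genuine gaps elsewhere. First, for \ref{pt:class:union} your plan to coinduce a witness ``from a cofinal $G_\alpha$'' fails at the start: a directed union of open subgroups has no cofinal member in general, and the stabilisers in a coinduced convex compact space are not visibly conjugate into $G_\alpha$-stabilisers. The tool you are missing is Lemma~\ref{lem:open}: if $H<G$ is relatively amenable and $O<G$ is open, then $H\cap O$ is relatively amenable in $O$. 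This is not formal --- relative amenability is relative to the ambient group, and a convex compact $O$-space does not extend to a $G$-space for free; the paper builds a positive unital $O$-equivariant map $\ru(O)\to\ru(G)$ from a Bruhat function and uses the mean criterion of Theorem~\ref{thm:equiv}\ref{pt:equiv:mean}. The same lemma, combined with a co-amenability argument (Lemma~\ref{lem:co-amen}), is what makes \ref{pt:class:open} work; ``bookkeeping'' does not suffice. Second, for \ref{pt:class:ameneq} you have manufactured a difficulty that is not there: when $N$ is amenable, the pullback to $G$ of a witness for $G/N$ already has amenable stabilisers, since these are amenable-by-amenable extensions. No upgrading is needed, and both directions of \ref{pt:class:ameneq} are soft.

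The serious problem is your proposed mechanism for \ref{pt:class:td:discrete} and \ref{pt:class:td:mai} (and hence for \ref{pt:class:conneq}, which you correctly intend to route through them). The ``$G$-equivariant field of $N$-witnesses over $\sF$, integrated by a compactness argument'' is not a proof sketch but a restatement of the difficulty: to make the stabiliser of a point in your total space amenable you would need each fibre to have amenable stabilisers for the full point-stabiliser $S_x<G$, not merely for $N$, and nothing in the hypothesis ``$N\in\cx$'' produces such fibres. Tellingly, if this construction worked it would only use $N\in\cx$ and would prove that $\cx$ is closed under arbitrary extensions --- which the authors explicitly state they do not know (Section~\ref{sec:exo}). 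The paper's actual route is entirely different and structure-theoretic: a reduction lemma (Lemma~\ref{lem:ext}) shows that for a subclass $\cx_0\se\cx$ stable under closed subgroups, quotients by amenable normal subgroups, and compact extensions, one has $N\in\cx_0$, $G/N\in\cx\Rightarrow G\in\cx$; its proof runs through van Dantzig's theorem, Kakutani--Kodaira, the solution to Hilbert's fifth problem, the amenable radical, and the decomposition $G^*/R\cong D\times S$, and one then checks the three hypotheses for $\cx_0=\{$directed unions of amenable-by-discrete open subgroups$\}$ and $\cx_0=\{$groups amenable at infinity$\}$ respectively. Without something playing the role of Lemma~\ref{lem:open} and Lemma~\ref{lem:ext}, the proposal does not establish \ref{pt:class:union}, \ref{pt:class:conneq}--\ref{pt:class:td:mai}.
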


We do not know of any group outside~$\cx$; in fact, we don't even know a group for which we could conjecture that it lies outside~$\cx$. Any example can be assumed totally disconnected by~\ref{pt:class:conneq} and compactly generated by~\ref{pt:class:union}. See also Section~\ref{sec:exo} for further discussion.

\medskip
It can happen that for a specific closed subgroup $H < G$ amenability follows from relative amenability even without knowing that $G$ belongs to $\cx$. This is for example the case if one assumes $H$ to be open, or normal, or with open normaliser, as a consequence of the following result. Derighetti~\cite{Derighetti78} considered the following condition: the trivial representation~$\one_H$ is weakly contained in the restriction to $H$ of the quasi-regular representation on $L^2(G/H)$. Recall that the latter is the induced representation $\mathrm{Ind}_H^G \one_H$ and notice that $L^2(G/H)$ actually contains~$\one_H$ if $H$ has open normaliser in $G$. An example without Derighetti's condition is $\mathrm{SL}_2(\RR)$ in $\mathrm{SL}_2(\CC)$, see~\cite{Derighetti78}.

\begin{prop}\label{prop:derigh}
Let $G$ be a locally compact group and $H < G$ be a closed subgroup with \textbf{Derighetti's condition}: $\one_H \prec(\mathrm{Ind}_H^G \one_H) |_H$.

If $H$ is relatively amenable, then it is amenable. 
\end{prop}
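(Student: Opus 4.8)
The plan is to show that Derighetti's condition lets us upgrade an $H$-fixed point in an arbitrary convex compact $G$-space to an $H$-invariant mean on $L^\infty(H)$, i.e.\ to ordinary amenability of $H$. The bridge between the two is the space of means on the uniformly continuous functions together with the classical characterization of amenability via weak containment of $\one_H$ in the regular representation. Concretely, I would first recall that $H$ is amenable if and only if $\one_H \prec \lambda_H$, where $\lambda_H$ is the left regular representation of $H$ on $L^2(H)$ (Hulanicki--Reiter). So it suffices to produce, from relative amenability, almost-invariant unit vectors for $\lambda_H$.

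The key step is the following: by continuity of induction with respect to weak containment, the hypothesis $\one_H \prec (\mathrm{Ind}_H^G \one_H)|_H$ combined with $\mathrm{Ind}_H^G \one_H$ being a $G$-representation suggests looking at the $G$-space of states (or of positive-definite functions of norm~$1$) associated with $\pi := \mathrm{Ind}_H^G \one_H = \lambda_{G/H}$, the quasi-regular representation on $L^2(G/H)$. The set $P_1(\pi)$ of unit vectors in the Hilbert space, viewed weakly, is not compact, but the set of states on the $C^*$-algebra generated by $\pi(G)$, or more simply a suitable weak-$*$ compact convex set of positive-type functions $G \to \CC$ of norm at most $1$ associated to $\pi$, carries a continuous affine $G$-action and is convex compact. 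Since $H$ is relatively amenable, $H$ fixes a point $\phi$ in this $G$-space. The point is that such a fixed point is an $H$-invariant positive-definite function associated with $\pi$, and $H$-invariance forces it (via the GNS construction, or by a standard averaging/Ryll-Nardzewski type argument) to yield $H$-almost-invariant vectors in $\pi$; then $\one_H \prec \pi|_H$ transports this to $\one_H \prec \lambda_H$, so $H$ is amenable. I would phrase the fixed-point extraction so as to directly get: the restriction $\pi|_H$ weakly contains $\one_H$ and, moreover, the $G$-space chosen is such that an $H$-fixed point exists \emph{only if} $\pi|_H$ has almost-invariant vectors — which is exactly Derighetti's hypothesis read backwards.

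The cleanest route, which I would actually carry out, is to use the convex compact $G$-space $M = \{\text{means on }\ru(G)\}$ (or on $\lu(G)$) — this is where the notion ``amenable relative to $G$'' has its most concrete incarnation — and to observe that Derighetti's condition is equivalent to saying that the restriction map from $G$-invariant behaviour to $H$-invariant behaviour detects amenability. Precisely: a mean on $L^\infty(G)$ that is $H$-invariant need not exist, but Derighetti's condition ensures that from the positive-definite function $\langle \pi(\cdot)\xi,\xi\rangle$ one can build, using that $\one_H\prec\pi|_H$, a net $\xi_n$ of unit vectors in $L^2(G/H)$ with $\|\pi(h)\xi_n-\xi_n\|\to 0$ uniformly on compacta of $H$; the associated net of probability densities $|\xi_n|^2$ on $G/H$ is asymptotically $H$-invariant, and pushing forward via a Bruhat section gives an asymptotically $H$-invariant net of probability measures on $G$, hence an $H$-invariant mean on $L^\infty(G)$, hence — by restriction/averaging over cosets — an $H$-invariant mean on $L^\infty(H)$. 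To get the net in the first place from relative amenability, apply the hypothesis to the convex compact $G$-space of positive functionals of norm $\le 1$ on the reduced $C^*$-algebra $C^*_{\lambda_{G/H}}(G)$ with its $G$-action by left translation on the $G$-part: $H$ fixes a functional, and relative amenability plus the structure of this space yields precisely such an almost-invariant net by a GNS argument.

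The main obstacle I anticipate is bookkeeping the (lack of) compactness: one has to choose the right weak-$*$ compact convex $G$-space so that (a) a fixed point genuinely exists by relative amenability and (b) a fixed point genuinely encodes almost-invariant vectors for $\pi|_H$ rather than merely an abstract invariant state that could be ``supported at infinity'' in a way that breaks the link with $\lambda_H$. Derighetti's condition $\one_H\prec\pi|_H$ is exactly what rules out the bad case, so the proof will hinge on setting up the GNS/almost-invariant-vector dictionary carefully enough that the hypothesis is used at precisely the step where $H$-invariance of the fixed functional is converted into $H$-almost-invariance of vectors in $L^2(G/H)$, and then invoking the Hulanicki--Reiter characterization of amenability of $H$.
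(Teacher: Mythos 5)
There is a genuine gap, and it sits at exactly the point where relative amenability must enter. You correctly note that Derighetti's condition supplies a net of unit vectors $\xi_n\in L^2(G/H)$ with $\|\pi(h)\xi_n-\xi_n\|\to 0$ uniformly on compacta of $H$, and hence (by squaring, as in Eymard) an $H$-invariant mean on $L^\infty(G/H)$. But your next step --- ``pushing forward via a Bruhat section gives an asymptotically $H$-invariant net of probability measures on $G$, hence an $H$-invariant mean on $L^\infty(G)$'' --- fails: a Borel section $\sigma\colon G/H\to G$ is not $H$-equivariant ($h\sigma(x)$ and $\sigma(hx)$ differ by an element of $H$ depending on $x$), so asymptotic $H$-invariance is not transported from $G/H$ to $G$. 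The same objection defeats the claim that $\one_H\prec\pi|_H$ ``transports'' to $\one_H\prec\lambda_H$. A sanity check: every closed \emph{normal} subgroup $H\lhd G$ satisfies Derighetti's condition trivially, since $H$ acts trivially on $G/H$ and hence $\one_H$ is actually contained in $\pi|_H$; yet normal subgroups are certainly not automatically amenable. So no argument deriving amenability of $H$ from Derighetti's condition alone can be correct, and in your write-up the hypothesis of relative amenability is consumed only to produce almost-invariant vectors in $L^2(G/H)$ --- which is redundant, since that is precisely what Derighetti's condition already asserts.

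The missing idea is that relative amenability should be used not to produce vectors in $L^2(G/H)$, but to transfer the mean from $L^\infty(G/H)$ back to $L^\infty(G)$. Theorem~\ref{thm:equiv}\ref{pt:equiv:opno} converts relative amenability of $H$ into a positive, unital, $G$-equivariant operator $\alpha\colon L^\infty(G)\to L^\infty(G/H)$; composing the $H$-invariant mean $m$ on $L^\infty(G/H)$ obtained from Derighetti's condition with $\alpha$ yields the $H$-invariant mean $m\circ\alpha$ on $L^\infty(G)$, and Theorem~\ref{thm:classic}\ref{pt:classic:mean} then gives amenability of $H$. This two-line composition is the entire proof; the GNS machinery, the states on the reduced $C^*$-algebra and the averaging arguments in your proposal are not needed and do not repair the equivariance failure of the section.
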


Coming back to the question of limits of amenable groups, we deduce a positive answer in a large number of cases thanks to Theorem~\ref{thm:closure}.

\begin{cor}[Limits of amenable subgroups]\label{cor:limit}
Let $G$ be a locally compact group and $H<G$ a closed subgroup which is a Chabauty limit of amenable closed subgroups of $G$. Then $H$ is amenable provided that at least one of the following conditions holds:

\begin{enumerate}[label=(\roman*)]
\item $G$ belongs to $\cx$;

\item $H<G$ satisfies Derighetti's condition (e.g.\ it is open or normal).
\end{enumerate}
\end{cor}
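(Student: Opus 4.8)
The plan is to deduce both cases from a single observation: a Chabauty limit of amenable subgroups is \emph{relatively} amenable. Indeed, every amenable closed subgroup of $G$ is relatively amenable (it fixes a point in every non-empty convex compact $G$-space, in particular in those arising from $G$ itself), and by Lemma~\ref{lem:chabauty} relative amenability is preserved under Chabauty limits. Hence the subgroup $H$ in the statement is relatively amenable. This is the only place where the Chabauty hypothesis enters; what remains is to upgrade relative amenability of the \emph{fixed} subgroup $H$ to amenability, using the additional hypothesis (i) or (ii).

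If (i) holds, i.e.\ $G\in\cx$, then by Proposition/Definition~\ref{prop:clarify} every relatively amenable subgroup of $G$ is amenable, so $H$ is amenable. If (ii) holds, then $H$ satisfies Derighetti's condition $\one_H\prec(\mathrm{Ind}_H^G\one_H)|_H$, and Proposition~\ref{prop:derigh} applies verbatim to conclude that $H$ is amenable. It remains only to justify the parenthetical claim that open and normal subgroups satisfy Derighetti's condition: in either case the normaliser $N_G(H)$ is open in $G$ --- it contains the open subgroup $H$ in the first case and equals $G$ in the second --- so, as recalled in the introduction, $L^2(G/H)$ contains the trivial representation $\one_H$ upon restriction to $H$, and a fortiori $\one_H\prec(\mathrm{Ind}_H^G\one_H)|_H$.

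I do not anticipate a genuine obstacle here: all the substance is carried by the previously established Lemma~\ref{lem:chabauty} (relative amenability is Chabauty-closed) and Proposition~\ref{prop:derigh}, which we are free to invoke, so the corollary is essentially a formal combination of these. The only point to check by hand is the elementary remark that open or normal subgroups have open normaliser and therefore satisfy Derighetti's condition, which is immediate.
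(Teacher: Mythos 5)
Your proposal is correct and follows exactly the paper's route: the paper likewise derives the corollary immediately from Lemma~\ref{lem:chabauty} (relative amenability is Chabauty-closed), then invokes Proposition~\ref{prop:clarify} for case~(i) and Proposition~\ref{prop:derigh} for case~(ii). Your explicit justification that open or normal subgroups have open normaliser and hence satisfy Derighetti's condition matches the remark already made in the introduction.
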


In particular, if the answer to the above question is negative in general, then any counter-example would provide an example of a group not amenable at infinity, which   would necessarily be different from the only known ones since it must be non-discrete by Theorem~\ref{thm:closure}.

\bigskip

The proofs of the results presented thus far rely on detailed comparisons of relative amenability and amenability of closed subgroups from various functional analytic viewpoints. In order to discuss those, let us first recall that there is a great number of well-known characterisations of amenability of the locally compact group $G$, for instance by the existence of an invariant mean on $L^\infty(G)$, on $\cb(G)$ or on $\ru(G)$, where the latter denotes the bounded right uniformly continuous functions. Turning to a closed subgroup $H<G$, we first describe relative versions of such characterisations; see Section~\ref{sec:notation} for details on the definitions and notation.

\begin{thm}[Relative amenability]\label{thm:equiv}
Let $G$ be a locally compact group and $H<G$ a closed subgroup. The following are equivalent:
\begin{enumerate}[label=(\roman*)]
\item Every non-empty convex compact $G$-space admits an $H$-fixed point.\label{pt:equiv:fixed}
\item The kernel $J^1(G,H)$ of the map  $L^1(G)\to L^1(G/H)$ has a bounded right approximate identity in $L^1_0(G)$.\label{pt:equiv:approx}
\item There is a left $H$-invariant mean on $\ru(G)$.\label{pt:equiv:mean}
\item There is a $G$-equivariant continuous linear map $\alpha\colon L^\infty(G)\to L^\infty(G/H)$ with $\alpha(\one_G)=\one_{G/H}$.\label{pt:equiv:op}
\item There is a map as in~\ref{pt:equiv:op} which is positive and of norm one.\label{pt:equiv:opno}
\end{enumerate}
Moreover, $G$-equivariance can be replaced by $L^1(G)$-equivariance in~\ref{pt:equiv:op} and~\ref{pt:equiv:opno}.
\end{thm}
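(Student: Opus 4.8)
The plan is to prove Theorem~\ref{thm:equiv} by establishing a cycle of implications between the five conditions, moving through the functional-analytic reformulations in a convenient order, say \ref{pt:equiv:fixed} $\Rightarrow$ \ref{pt:equiv:op} $\Rightarrow$ \ref{pt:equiv:opno} $\Rightarrow$ \ref{pt:equiv:mean} $\Rightarrow$ \ref{pt:equiv:approx} $\Rightarrow$ \ref{pt:equiv:fixed}, and then dealing separately with the final sentence about replacing $G$-equivariance by $L^1(G)$-equivariance. For \ref{pt:equiv:fixed} $\Rightarrow$ \ref{pt:equiv:op}, I would observe that the set of all $G$-equivariant norm-one positive linear maps $\alpha\colon L^\infty(G)\to L^\infty(G/H)$ sending $\one_G$ to $\one_{G/H}$ forms a convex subset of the space $\mathscr B(L^\infty(G), L^\infty(G/H))$ equipped with the point-weak* topology; identifying this space with a space of operators and using Banach--Alaoglu together with the fact that $L^\infty(G/H)$ is a dual space, this set is convex and compact, and nonempty because the canonical conditional-expectation-type map (e.g.\ composition with a Borel section or, more robustly, the adjoint of $L^1(G)\to L^1(G/H)$ suitably normalised) gives at least one element; then $G$ acts on it by translation and a fixed point is precisely what we want. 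This part needs care to get a genuine convex compact $G$-space in the sense of the introduction.

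Next, \ref{pt:equiv:op} $\Rightarrow$ \ref{pt:equiv:opno} is a standard ``positivisation'' argument: given any unital $G$-equivariant bounded map $\alpha$, one produces a positive norm-one one, for instance via the second-dual / bidual trick or by averaging, using that $\alpha\one_G = \one_{G/H}$ forces norm at least one and that taking $|\alpha|$ or passing through the universal representation keeps equivariance. For \ref{pt:equiv:opno} $\Rightarrow$ \ref{pt:equiv:mean}, compose the positive unital $G$-map $\alpha\colon L^\infty(G)\to L^\infty(G/H)$ with integration against the $G$-invariant mean on $L^\infty(G/H)$... but of course $G/H$ need not carry an invariant mean, so instead I would compose $\alpha$ with evaluation at the coset $eH\in G/H$ after restricting attention to $\ru(G)$: for $f\in\ru(G)$, the function $\alpha(f)$ can be taken genuinely continuous, and $m(f) := \alpha(f)(eH)$ is then a mean on $\ru(G)$ that is $H$-invariant because $eH$ is $H$-fixed and $\alpha$ is $G$-equivariant. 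The passage $\ru(G)\to L^\infty(G)$ and the identification of $L^\infty(G/H)$-valued continuous-function values is the fiddly point here. For \ref{pt:equiv:mean} $\Rightarrow$ \ref{pt:equiv:approx}, I would use the duality between $L^1_0(G)$ (functions of integral zero, or rather the relevant subspace) and the quotient of $\ru(G)$ by constants, together with a Reiter/Namioka-type convexity argument: an $H$-invariant mean on $\ru(G)$ yields, after a weak*-to-norm convexity passage (Mazur), a net in $L^1_0(G)$ approximating invariance under the $H$-translation action on $J^1(G,H)$, which is exactly the statement that $J^1(G,H)$ admits a bounded right approximate identity inside $L^1_0(G)$; this is the step that ties in with Reiter's 1968 problem mentioned in the abstract.

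Finally, \ref{pt:equiv:approx} $\Rightarrow$ \ref{pt:equiv:fixed} closes the loop: given a convex compact $G$-space $K$, pick any point $k_0\in K$, and use the bounded approximate identity $(u_i)$ in $J^1(G,H)$ to form the net $k_i := u_i \cdot k_0$ (where $L^1(G)$ acts on $K$ by the standard integration of the affine action); a weak* cluster point $k_\infty$ of this net is then $H$-fixed, because the approximate-identity property makes $h\cdot k_i - k_i \to 0$ for all $h\in H$ (using that $\lambda(h)u_i - u_i$ lies in a null-convergent set relative to the quotient $L^1(G/H)$, and that elements of $J^1$ act trivially on the image of $k_0$ in the ``$L^1(G/H)$-part''). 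The genuinely delicate interplay is the bookkeeping in this last step, matching the precise definition of $J^1(G,H)$ and $L^1_0(G)$ with how $L^1(G)$ acts on convex compact $G$-spaces; I expect this, together with getting \ref{pt:equiv:mean} $\Rightarrow$ \ref{pt:equiv:approx} exactly right, to be the main obstacle. The last sentence about $L^1(G)$-equivariance versus $G$-equivariance follows from the standard fact that a bounded linear map between $L^\infty$-type modules is $G$-equivariant if and only if it is $L^1(G)$-equivariant, since the $G$-action is recovered from the $L^1(G)$-module structure by taking an approximate identity in $L^1(G)$ and the actions agree on a weak*-dense set; I would dispatch this in a short closing paragraph.
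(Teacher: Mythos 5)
Your cycle has the right general flavour in places (the step from the positive unital operator to a mean on $\ru(G)$ by evaluating $\alpha(f)$ at the base coset is essentially what the paper does, and your (iii)$\Rightarrow$(ii) names the right tools, Goldstine-type duality plus the Mazur convexity trick, even though you describe the target as ``approximate $H$-invariance'' rather than the actual approximate-identity property $x*u_i\to x$, which requires the extra observation that $\alpha(q)\in L^\infty(G/H)$ is annihilated by every $x\in J^1(G,H)$). But three of your steps have genuine gaps. First, (i)$\Rightarrow$(iv) as you set it up is circular: the convex set you propose consists of maps that are \emph{already} $G$-equivariant with range $L^\infty(G/H)$, so exhibiting a single element of it is exactly statement (v) --- no fixed point is needed, and none is available either, since right translation does not even preserve $L^\infty(G/H)$ unless $H$ is normal. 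Your proposed witnesses for nonemptiness also fail: composition with a Borel section is not well defined on $L^\infty$-classes and is not equivariant, and the adjoint of $T\colon L^1(G)\to L^1(G/H)$ is the inclusion $L^\infty(G/H)\hookrightarrow L^\infty(G)$, i.e.\ it goes the wrong way. The paper instead applies (i) to the space of means on $\ru(G)$ (equivalently, to the convex compact set of positive unital left-$G$-equivariant operators on $L^\infty(G)$ itself, acted on by right translation), obtains (iii), and only then constructs $\alpha$ from the mean $m$ via $\langle\alpha(f),\fhi\rangle=m(f\diamond\fhi)$; that construction is where the content lies.

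Second, (ii)$\Rightarrow$(i) does not work as sketched: the approximate identity lives in $L^1_0(G)$, so $\int u_i=0$ and $u_i\cdot k_0$ is not a barycentre and leaves $K$; moreover $\lambda(h)u_i-u_i$ does not lie in $J^1(G,H)$ for $h\in H$ (left translation by $h$ fixes only the base coset of $G/H$), so the claimed asymptotic $H$-invariance has no justification. The paper instead goes (ii)$\Rightarrow$(iv)$_{L^1}$ by taking an ultralimit of the operators $\langle\alpha_i(f),\fhi\rangle=\langle f,\fhi-\fhi*u_i\rangle$, which are unital because $u_i\in L^1_0(G)$ and whose limit ranges in $L^\infty(G/H)$ because $\fhi-\fhi*u_i\to0$ for $\fhi\in J^1(G,H)$. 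Third, and most seriously, your closing claim that a bounded operator on $L^\infty(G)$ is $G$-equivariant if and only if it is $L^1(G)$-equivariant is false: the inclusion $\sL_{L^1(G)}(L^\infty(G))\subseteq\sL_G(L^\infty(G))$ can be strict (Raimi for $G=\RR$; Granirer, Rudin in general), precisely because one cannot recover the $G$-action pointwise from the convolution action on all of $L^\infty$. The final sentence of the theorem is therefore a statement about the \emph{existence} of maps of each kind, not about individual maps, and the paper proves it by showing that a merely $G$-equivariant $\alpha$ still yields an $H$-invariant functional on $\ru(G)$ (positivised by passing to $m/|m|$), from which an $L^1(G)$-equivariant operator can be rebuilt. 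Your (iv)$\Rightarrow$(v) positivisation of the operator itself is also left vague (it would need regularity of operators out of an AM-space); positivising the functional $m$ instead is the clean route.
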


Now comes a point-by-point comparison of these criteria with the case of genuine (i.e. non-relative) amenability of the subgroup.

\begin{thm}[The classical picture]\label{thm:classic}
Let $G$ be a locally compact group and $H<G$ a closed subgroup. The following are equivalent to the amenability of $H$:
\begin{enumerate}[label=(\roman*)]
\item Every convex compact $G$-space admits an $H$-fixed point in the closed convex hull of any $H$-orbit.\label{pt:classic:fixed}
\item $J^1(G,H)$ has a bounded right approximate identity in $L^1_0(H)$.\label{pt:classic:approx}
\item There is a left $H$-invariant mean on $\cb(G)$, or equivalently on $L^\infty(G)$.\label{pt:classic:mean}
\item There is a $G$-equivariant continuous linear map $\alpha\colon L^\infty(G)\to L^\infty(G/H)$ which is the identity on $L^\infty(G/H)$.\label{pt:classic:op}
\item There is a $G$-equivariant conditional expectation $\alpha\colon L^\infty(G)\to L^\infty(G/H)$.\label{pt:classic:opno}
%
\end{enumerate}
Moreover, $G$-equivariance can be replaced by $L^1(G)$-equivariance in~\ref{pt:classic:op} and~\ref{pt:classic:opno}.
\end{thm}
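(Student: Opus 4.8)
The plan is to run a cycle of implications \ref{pt:classic:fixed}$\Rightarrow$\ref{pt:classic:opno}$\Rightarrow$\ref{pt:classic:op}$\Rightarrow$\ref{pt:classic:mean}$\Rightarrow$(amenability of $H$)$\Rightarrow$\ref{pt:classic:approx}$\Rightarrow$\ref{pt:classic:fixed}, and then to observe that the $L^1(G)$-equivariant variants of \ref{pt:classic:op} and \ref{pt:classic:opno} slot into the same loop with no extra work, exactly as in Theorem~\ref{thm:equiv}. The two key inputs I would invoke are: (1) the classical characterisations of amenability of the \emph{group} $H$ itself (invariant mean on $\cb(H)$ or $L^\infty(H)$, existence of a left-invariant state, etc.); and (2) the fact that amenability of $H$ is equivalent to the existence of a $G$-equivariant conditional expectation $L^\infty(G)\to L^\infty(G/H)$, which is essentially the Greenleaf/Eymard-style identification of means on $H$ with equivariant projections onto the quotient. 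Wherever possible I would piggyback on the proof of Theorem~\ref{thm:equiv}, since \ref{pt:classic:op}--\ref{pt:classic:opno} differ from \ref{pt:equiv:op}--\ref{pt:equiv:opno} only in requiring the map to restrict to the identity (resp.\ be a conditional expectation) on $L^\infty(G/H)$ rather than merely sending $\one_G$ to $\one_{G/H}$.

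The implications \ref{pt:classic:opno}$\Rightarrow$\ref{pt:classic:op}$\Rightarrow$\ref{pt:classic:mean} are the soft direction: a conditional expectation is in particular a unital positive equivariant map that is the identity on $L^\infty(G/H)$; and given such an $\alpha$, precomposing with the inclusion of $H$-invariant (indeed $G$-invariant under the quotient action) functions and composing with integration against Haar measure on the compact-or-not quotient is not quite available, so instead I would feed $\alpha$ into the machine that produces an $H$-invariant mean on $\cb(G)$: evaluate $\alpha$ at the class of the identity coset to get a mean $m$ on $L^\infty(G)$ with $m(\one)=1$, and use equivariance under $H\subseteq G$ together with the fact that the point $eH\in G/H$ is $H$-fixed. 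For \ref{pt:classic:mean}$\Rightarrow$(amenability of $H$): an $H$-invariant mean on $\cb(G)$ restricts, via a Bruhat section / averaging over a regular Borel cross-section of $G/H$, to an $H$-invariant mean on $\cb(H)$ — this is the standard ``closed subgroups inherit means'' argument and is where one uses that $H$ is closed. The converse (amenability of $H$)$\Rightarrow$\ref{pt:classic:opno} is the construction of the equivariant conditional expectation: integrate over $G/H$ using a quasi-invariant measure and an invariant mean on $H$ to build the projection; this is classical (cf.\ Greenleaf), and \ref{pt:classic:opno}$\Rightarrow$\ref{pt:classic:fixed} then follows by the usual barycentre argument, using the expectation to push an $H$-invariant mean into a genuine fixed point in the closed convex hull of an $H$-orbit.

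I expect the main obstacle to be the equivalence \ref{pt:classic:fixed}$\Leftrightarrow$(amenability of $H$) with the precise refinement ``fixed point \emph{in the closed convex hull of any $H$-orbit}'': one direction is the barycentre construction just sketched, but for the other direction one must recover amenability of $H$ as an abstract locally compact group from a fixed-point statement phrased only in terms of $G$-spaces. The trick is to take a convex compact $H$-space $K$ and induce it up to a convex compact $G$-space — concretely, the space of weak-$*$ continuous equivariant maps $G\to K$, or the space of probability measures on a suitable bundle — in such a way that an $H$-fixed point in the closed convex hull of an $H$-orbit in the induced space descends to an $H$-fixed point in $K$. Making ``induction'' work in the convex-compact category and checking that the closed-convex-hull-of-an-orbit condition is preserved is the delicate technical point; everything else is bookkeeping, and the $L^1(G)$-equivariance remark follows because $L^1(G)$ acts on $L^\infty(G)$ and $L^\infty(G/H)$ with the same invariant vectors and the same module structure, so any $L^1(G)$-equivariant map of the required type integrates to a $G$-equivariant one and conversely, just as in Theorem~\ref{thm:equiv}.
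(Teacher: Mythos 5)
There is a genuine gap, and it sits exactly at the place you label as ``the soft direction'': the implication \ref{pt:classic:op}$\Rightarrow$\ref{pt:classic:mean}. Your plan is to evaluate $\alpha(f)$ at the identity coset to produce an $H$-invariant mean on $L^\infty(G)$. But elements of $L^\infty(G/H)$ are function \emph{classes}, so $\alpha(f)$ can only be evaluated at a point when it is a continuous vector, and $G$-equivariance guarantees this only for $f\in\ru(G)$ (this is precisely how the correspondence $\alpha\leftrightarrow m$ is set up in the proof of Theorem~\ref{thm:equiv}). The evaluation-at-$e$ trick therefore yields an $H$-invariant mean on $\ru(G)$ only --- that is, condition~\ref{pt:equiv:mean} of Theorem~\ref{thm:equiv}, i.e.\ \emph{relative} amenability, not condition~\ref{pt:classic:mean} of the present theorem. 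Collapsing these two is exactly the conflation the whole paper is organised around avoiding: the extra hypothesis in~\ref{pt:classic:op}, that $\alpha$ restricts to the identity on $L^\infty(G/H)$ rather than merely sending $\one_G$ to $\one_{G/H}$, corresponds under $\alpha\leftrightarrow m$ only to the statement that $m|_{\ru(G/H)}$ is the Dirac mass at the trivial coset, and it is not at all clear how to upgrade such an $m$ to a mean on $\cb(G)$ or $L^\infty(G)$. With this implication broken, your cycle never reaches genuine amenability of $H$ from the operator conditions, so \ref{pt:classic:op} and \ref{pt:classic:opno} are only shown to imply relative amenability.

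The paper's proof acknowledges that this is the hard implication and handles it by an entirely different, ergodic-theoretic argument in the spirit of Zimmer: one reduces to $G$ second countable (via compact generation and Kakutani--Kodaira), takes a separable $H$-submodule $E\se\ru(H)$ with its compact space of means $K\se E^*$, extends $\alpha$ to an operator $\alpha_E$ on $\lw(G,E^*)$, and uses a Borel section $\sigma\colon G/H\to G$ and the cocycle $\gamma(g,x)=\sigma(gx)\inv g\sigma(x)$ to build a map $\psi\colon G\to K$; the $L^\infty(G/H)$-linearity of $\alpha$ (this is where~\ref{pt:classic:opno} is used in an essential way) shows $\alpha_E\psi$ is $\gamma$-equivariant, and ergodicity of a countable dense subgroup of $G$ acting on $G$ forces the resulting right-$G$-invariant map into $K$ to be essentially constant, producing an $H$-fixed mean on $E$. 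None of this is in your outline. Two smaller points: your worry about inducing convex compact spaces for \ref{pt:classic:fixed} is misplaced --- the paper never recovers amenability from~\ref{pt:classic:fixed} directly, but rather proves \ref{pt:classic:fixed}$\Rightarrow$\ref{pt:classic:opno} by finding an $H$-fixed point in the closed convex hull of the $G$-orbit of the identity inside the unital positive equivariant operators (any such point is automatically $L^\infty(G/H)$-linear) and then runs the Zimmer argument; and the implications involving~\ref{pt:classic:approx} are not proved but quoted from Derighetti, whereas your step \ref{pt:classic:approx}$\Rightarrow$\ref{pt:classic:fixed} is asserted without any indication of proof.
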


\noindent
(Recall that a \textbf{conditional expectation} is by definition a positive norm one $L^\infty(G/H)$-linear map.)

\smallskip
The equivalence of~\ref{pt:classic:approx} with amenability in Theorem~\ref{thm:classic} is due to Derighetti~\cite{Derighetti78}. The criterion~\ref{pt:classic:mean} is very classical. As for~\ref{pt:classic:opno}, it was first proved in~\cite[4.4.5]{Anantharaman03}. This criterion is related to~\cite[Thm.~A]{Adams-Elliott-Giordano} (see Section~\ref{sec:measure} regarding an apparent gap in the latter).

\bigskip

The characterisations in terms of $J^1(G, H)$ in Theorems~\ref{thm:equiv} and~\ref{thm:classic} are relevant to the following question due to Reiter (\cite{Reiter68CRAS}, \cite{Reiter68} and~\cite[\S12 Rem.~1]{Reiter71}): \itshape Given a closed subgroup $H < G$, is it true that   $H$ is amenable if and only if $J^1(G,H)$ has a bounded right approximate identity?\upshape

This was known when $H$ is normal~\cite[\S12(v)]{Reiter71} and more generally when $H$ has Derighetti's condition as defined above~\cite[Prop.~1]{Derighetti78}. Both conditions are rather restrictive and fail e.g.\ for $H=\mathrm{SL}_2(\RR)$ in $G=\mathrm{SL}_2(\CC)$, see~\cite{Derighetti78}. Based on the criteria stated in Theorem~\ref{thm:classic}, we deduce an affirmative answer to Reiter's question:

\begin{thm}\label{thm:Reiter}
Let $G$ be a locally compact group and $H<G$ any closed subgroup. The following are equivalent:
\begin{enumerate}[label=(\roman*)]
\item $H$ is amenable.\label{pt:pReiter:amen}
\item The algebra $J^1(G,H)$ has a bounded right approximate identity.\label{pt:pReiter:alg}
\end{enumerate}
\end{thm}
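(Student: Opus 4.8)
The plan is to read off the equivalence of \ref{pt:pReiter:amen} and \ref{pt:pReiter:alg} from Theorem~\ref{thm:classic}. The implication \ref{pt:pReiter:amen}$\Rightarrow$\ref{pt:pReiter:alg} is immediate: if $H$ is amenable then, by Theorem~\ref{thm:classic}\ref{pt:classic:approx} (Derighetti), $J^1(G,H)$ admits a bounded right approximate identity contained in $L^1_0(H)$, and a fortiori it admits one at all. So the substance is the converse \ref{pt:pReiter:alg}$\Rightarrow$\ref{pt:pReiter:amen}, which I would establish by producing the operator of criterion \ref{pt:classic:op} in Theorem~\ref{thm:classic} (either form of equivariance is permitted by the last assertion of that theorem).

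Let $(e_i)_{i\in I}$ be a bounded right approximate identity for the algebra $J^1(G,H)$, with $\norms{e_i}_1\le C$ for all $i$; thus $e_i\in J^1(G,H)$ and $a*e_i\to a$ for every $a\in J^1(G,H)$. Recall the standard facts (involving a choice of quasi-invariant measure on $G/H$): the averaging map $L^1(G)\to L^1(G/H)$ is a surjective morphism of left $L^1(G)$-modules with kernel $J^1(G,H)$; hence $J^1(G,H)$ is a closed left ideal of $L^1(G)$ and, dually, $L^\infty(G/H)$ is identified with the annihilator $J^1(G,H)^\perp\subseteq L^1(G)^*=L^\infty(G)$, i.e.\ with the functions that are constant along right $H$-cosets. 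Fix a non-principal ultrafilter $\mathcal{U}$ on $I$ and define $\alpha\colon L^\infty(G)\to L^\infty(G)$ by
\[
\ip{\alpha(\psi),\,a}\;=\;\ulim_{\mathcal{U}}\big(\ip{\psi,\,a}-\ip{\psi,\,a*e_i}\big)\qquad(\psi\in L^\infty(G),\ a\in L^1(G)).
\]
Since $\norms{a*e_i}_1\le C\norms{a}_1$, the bracket is a bounded net of scalars, so $\alpha$ is a well-defined bounded linear map with $\norms{\alpha}\le 1+C$.

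Three short verifications remain. (1)~$\alpha$ takes values in $L^\infty(G/H)=J^1(G,H)^\perp$: for $a\in J^1(G,H)$ one has $a*e_i\to a$ in $L^1(G)$, so the bracket tends to $0$ and $\ip{\alpha(\psi),a}=0$. (2)~$\alpha$ is the identity on $L^\infty(G/H)$: if $\psi\in J^1(G,H)^\perp$ then, $J^1(G,H)$ being a left ideal containing every $e_i$, we have $a*e_i\in J^1(G,H)$ for all $a\in L^1(G)$, hence $\ip{\psi,a*e_i}=0$ and $\ip{\alpha(\psi),a}=\ip{\psi,a}$. (3)~$\alpha$ is $L^1(G)$-equivariant (indeed $G$-equivariant) for the left-translation actions: left translation, and left convolution, on $L^1(G)$ commute with right convolution by $e_i$, so they pass through the defining formula unchanged. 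By these points, $\alpha\colon L^\infty(G)\to L^\infty(G/H)$ is exactly the map required in Theorem~\ref{thm:classic}\ref{pt:classic:op}, whence $H$ is amenable.

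I do not expect a serious obstacle here, since the functional-analytic depth is all carried by Theorem~\ref{thm:classic}; the points deserving care are bookkeeping in nature: the identification of $L^\infty(G/H)$ with $J^1(G,H)^\perp$ through a quasi-invariant measure, and the observation that it is precisely the interplay of ``$(e_i)$ is a \emph{right} approximate identity'' with ``$J^1(G,H)$ is a \emph{left} ideal'' that makes steps (1) and (2) hold at the same time. Note finally that $\alpha$ need not be positive or norm one, which is why I invoke \ref{pt:classic:op} rather than \ref{pt:classic:opno}; alternatively one could begin with the positive unital operator furnished by relative amenability (Theorem~\ref{thm:equiv}\ref{pt:equiv:opno}, applicable since $J^1(G,H)\subseteq L^1_0(G)$) and repair it using the $e_i$, but this detour is unnecessary.
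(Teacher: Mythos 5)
Your treatment of the substantive direction \ref{pt:pReiter:alg}$\Rightarrow$\ref{pt:pReiter:amen} is correct, and it is more self-contained than the paper's. The paper quotes a general Banach-algebra fact (Forrest's thesis, also Bekka): for an algebra $A$ with bounded right approximate identity and a closed left ideal $J$, the ideal $J$ has a bounded right approximate identity in itself if and only if $J^\perp$ is right-invariantly complemented in $A^*$; it then sets $A=L^1(G)$, $J=J^1(G,H)$, $J^\perp=L^\infty(G/H)$ and concludes by the $L^1(G)$-equivariant form of Theorem~\ref{thm:classic}\ref{pt:classic:op}, exactly as you do. Your ultralimit construction of $\alpha$ is an inlined proof of the half of that cited fact needed here, and it mirrors the construction used in Section~\ref{sec:char} for \ref{pt:equiv:approx}$\Rightarrow$\ref{pt:equiv:op}$_{L^1}$; your three verifications are right, and you correctly identify the crux as the interplay between ``left ideal'' and ``\emph{right} approximate identity''. (Minor point: over a general directed index set you want an ultrafilter dominating the order filter, not merely a non-principal one, so that ultralimits agree with existing limits.)

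The gap is in the direction you call immediate. Theorem~\ref{thm:classic}\ref{pt:classic:approx} produces a bounded net in $L^1_0(H)$, and $L^1_0(H)$ is \emph{not} contained in $J^1(G,H)$ --- it is not even contained in $L^1(G)$ unless $H$ is open, since its elements are functions on $H$ acting on $L^1(G)$ by right convolution as measures. A bounded right approximate identity \emph{of the algebra} $J^1(G,H)$, which is what \ref{pt:pReiter:alg} asserts, must by definition be a net inside $J^1(G,H)$, so ``a fortiori'' does not apply. The repair is short: if $\{u_i\}\se L^1_0(H)$ is the Derighetti net and $\{v_j\}$ is a bounded approximate identity of $L^1(G)$, then $v_j*u_i\in J^1(G,H)$ (integrating $\fhi*u$ over $H$-cosets yields $T(\fhi)\cdot\int_H u=0$), this net is bounded, and the estimate $\norms{a*v_j*u_i-a}_1\le \norms{u_i}_1\,\norms{a*v_j-a}_1+\norms{a*u_i-a}_1$ makes it a bounded right approximate identity of $J^1(G,H)$ over the product directed set. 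Alternatively, quote the Forrest--Bekka criterion in its ``if and only if'' form, which is how the paper dispatches both implications at once via Theorem~\ref{thm:classic}\ref{pt:classic:op}.
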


Although most functional analytic methods used here are only available for topological groups when they are locally compact, the general definitions of amenability and relative amenability make sense for arbitrary topological groups. In that generality, amenability is less well-behaved; for instance, it is well-known that there are amenable Polish groups with non-amenable closed subgroups. This shows in particular that relative amenability is very different from amenability in that setting.

Here is a classical example. Let $H$ be any countable non-amenable group (with the discrete topology). Let $G$ be the unitary group of the Hilbert space $\ell^2(H)$, endowed with the strong operator topology (which coincides with the weak operator topology on $G$). Then $G$ is a Polish group and $H$ is a discrete subgroup of $G$ via the regular representation. However, Pierre de la Harpe showed that $G$ is amenable (Proposition~1(iii) in~\cite{Harpe73}, with a different terminology).

\bigskip

To wrap up this introduction, we propose a generalisation of our fixed point property to measurable actions. Let $G$ be a locally compact group with a measurable action on a standard probability space $(X, \mu)$ preserving the class of $\mu$ (i.e.\ preserving null-sets, but in general not $\mu$). We shall say that this action is \emph{relatively amenable} if for any non-empty convex compact $G$-space $K$ there is a measurable map $\Phi\colon X\to K$ which is $G$-equivariant in the sense that for all $g\in G$ we have $\Phi(g x) = g \Phi(x)$ for a.e.~$x\in X$.

This property follows if the action is amenable in Zimmer's sense~\cite[\S4.3]{Zimmer84}, assuming $G$ second countable. However, the property is a priori weaker. If $X=G/H$ (endowed with the unique invariant measure class), relative amenability of $G\acts X$ is equivalent to the relative amenability of $H$, see Proposition~\ref{prop:furst}. Here are two suggestions: (1)~Prove that if $G$ belongs to the class~$\cx$, then relative amenability of $G$-actions is equivalent to Zimmer-amenability. (2)~In general, prove that relative amenability of an action is equivalent to the conjunction of the amenability of the induced equivalence relation with the relative amenability of the stabiliser of a.e.\ point in $X$.

Note that~(2) would imply~(1) by using Theorem~A of~\cite{Adams-Elliott-Giordano} (the problem in the latter is precisely not an issue in~$\cx$, see Section~\ref{sec:measure}).

\subsection*{Structure of the paper}
After a preliminary Section~\ref{sec:notation} fixing the basic definitions and notation, we proceed to the proof of Theorem~\ref{thm:equiv} in Section~\ref{sec:char}. Section~\ref{sec:classic} collects the arguments and suitable references needed to establish Theorem~\ref{thm:classic}. Then Section~\ref{sec:closure} is devoted to the stability properties of the class $\cx$; it contains the proof of Theorem~\ref{thm:closure}. Proposition~\ref{prop:derigh} and Corollary~\ref{cor:limit} and Theorem~\ref{thm:Reiter} are proved in Section~\ref{sec:cors}, while Section~\ref{sec:further} presents a few additional observations.

\subsection*{Acknowledgements}
We thank Marc Burger for pointing out Portmann's thesis~\cite{Portmann} to us, where a related relative fixed point property was introduced and studied independently (see also \S\ref{sec:relative} below). We thank Matthias Neufang for pointing out an inaccuracy in a previous draft and indicating the reference~\cite{Hively73}. We thank the anonymous referee for comments that improved the exposition.

\section{Notation}\label{sec:notation}
All Banach spaces will be over~$\RR$, but the statements and proofs hold unchanged over~$\CC$. We use ${\langle\cdot,\cdot\rangle}$ for various duality pairings. Spaces of measurable bounded function classes are denoted by $L^\infty$, while $\cb$ denotes continuous bounded functions. We endow these spaces with the sup-norm.

\bigskip
The bounded \textbf{right uniformly continuous} functions $\ru(G)$ on a topological group $G$ are the \emph{continuous vectors} in $\cb(G)$ for the left translation representation, i.e. those vectors $f \in \cb(G)$ such that the associated orbit map $G \to \cb(G)$ is continuous. In particular, the $G$-action on $\ru(G)$ is jointly continuous and, hence, the space of means on $\ru(G)$ is a convex compact $G$-space for the weak-* topology. When $G$ is locally compact, the space $\ru(G)$  coincides with the set of continuous vectors in $L^\infty(G)$, and Cohen's factorisation theorem~\cite[16.1]{Doran-Wichmann} implies that $\ru(G)$ is exactly the set of all convolutions $\fhi*f$ with $\fhi\in L^1(G)$ and $f\in L^\infty(G)$.

We warn the reader that some authors use the opposite conventions for left and right uniform continuity. With the present convention, the space $\lu(G)$ of left uniformly continuous bounded functions is the set of continuous vectors for the right translation and has no significant interest for this paper. Keeping in mind the apparent gap between Theorem~\ref{thm:equiv}\ref{pt:equiv:mean} and Theorem~\ref{thm:classic}\ref{pt:classic:mean}, we point out that the existence of a left $H$-invariant mean on $\lu(G)$ is equivalent to amenability because a suitable right convolution provides an $H$-equivariant unital map $L^\infty(G)\to \lu(G)$. For the same reason, the existence of a left $H$-invariant mean on the space of bilaterally uniformly continuous bounded functions on $G$ is equivalent to the relative amenability of $H<G$.

\bigskip
We now recall the setting considered in 1968 by Reiter~\cite{Reiter68CRAS,Reiter68}; the facts below are presented in detail in~\cite[\S8]{Reiter-Stegeman}.

Let $G$ be a locally compact group and $H<G$ a closed subgroup. Choose left Haar measures on $G$ and on $H$; this determines a $G$-quasi-invariant measure on $G/H$. Moreover, integration over $H$ provides a $G$-equivariant continuous linear surjection
$$T\colon L^1(G) \lra L^1(G/H).$$
The kernel of $T$ is denoted by $J^1(G,H)$ and does not depend on the choices made. Notice that $J^1(G,H)$ is a closed left ideal in $L^1(G)$ and in particular itself a Banach algebra. It is moreover a right module over the algebra $L^1_0(H)$, which is by definition the kernel of the integration morphism $L^1(H)\to\RR$, but not over $L^1_0(G)$.

\bigskip
A \textbf{right approximate identity} in a normed algebra $A$ is a net $\{u_i\}_{i\in I}$ in $A$ such that $a u_i$ converges to $a$ in norm for all $a\in A$. It is said bounded if there is a bound on the norm of all $u_i$. More generally, if $M$ is a normed right $A$-module and $B\se A$ any subset, we say that $M$ has a [bounded] right approximate identity in $B$ if there is a [bounded] net $\{u_i\}_{i\in I}$ in $B$ such that $m u_i$ converges to $m$ in norm for all $m\in M$. 

It is plain how to replace right by left in these definitions, and a [bounded] \textbf{approximate identity} in a normed algebra refers to the case where both left and right conditions are satisfied. For instance, normalised densities supported on arbitrarily small neighbourhoods of the identity provide a bounded approximate identity for $L^1(G)$.

Right before the proof of Theorem~\ref{thm:Reiter}, we shall clarify the difference with the terminology used at the time of Reiter's work in the 1960s--1970s.

\bigskip
Finally, we recall that the inversion map $g\mapsto g\inv$ induces an isometric involutive anti-automorphism $f\mapsto f^*$ of the Banach algebra $L^1(G)$ given by $f^*(g) = \Delta(g\inv) f(g\inv)$ wherein $\Delta$ is the modular function.

\section{Characterising relative amenability}\label{sec:char}
\begin{flushright}
\begin{minipage}[t]{0.55\linewidth}\itshape\small
\ldots c'est ce que nous appelons un th\'eor\`eme relatif.
\begin{flushright}
\upshape\tiny
H. de Balzac, \emph{\'Etudes Analytiques},\\
in: \OE uvres compl\`etes t.~18, p.~646\\
Houssiaux, Paris (1855).
\end{flushright}
\end{minipage}
\end{flushright}
\vspace{5mm}

We proceed to the proof of Theorem~\ref{thm:equiv}. In order to clarify the logical structure of the proof, we denote by~{\ref{pt:equiv:op}$_{L^1}$} and~{\ref{pt:equiv:opno}$_{L^1}$} the statements corresponding to~\ref{pt:equiv:op} and~\ref{pt:equiv:opno} with $G$-equivariance replaced by $L^1(G)$-equivariance. The proof consists of establishing the following implications, the dashed arrows being trivial.
$$\xymatrix{
\text{\ref{pt:equiv:fixed}} \ar@{<=>}[rr] && \text{\ref{pt:equiv:mean}} \ar@{<=>}[rr]\ar@{=>}[dl] && \text{\ref{pt:equiv:opno}$_{L^1}$}\ar@{=>}[rr]  \ar@{:>}[dl] && \text{\ref{pt:equiv:opno}} \ar@{:>}[dl] \\
&\text{\ref{pt:equiv:approx}} \ar@{=>}[rr] && \text{\ref{pt:equiv:op}$_{L^1}$}\ar@{=>}[rr] && \text{\ref{pt:equiv:op}} \ar@{=>}[ulll]&
}$$
There is a pleasant surprise regarding the equivalence of conditions~\ref{pt:equiv:fixed} and~\ref{pt:equiv:mean}: the classical proof in the case $H=G$ can be used verbatim. Therefore we refer either to Rickert's original proof~\cite[Thm.~4.2]{Rickert67} or to~\cite[Thm.~3.3.1]{Greenleafbook}.

\bigskip

We denote by $\sL(L^\infty(G))$ the space of continuous linear operators of $L^\infty(G)$ and endow it with the $G$-representation by post-composition with the right translation:
$$\big( (g.\alpha)(f)\big)(x) = \alpha(f)(xg)$$
for $x,g\in G$, $f\in L^\infty(G)$, $\alpha\in \sL(L^\infty(G))$. We denote by $\sL_G(L^\infty(G))$ the invariant closed subspace of those operators that are equivariant for the left translation $G$-action, and likewise $\sL_{L^1(G)}(L^\infty(G))$ for the $L^1(G)$-action by left convolution. 

It is known (by an approximate identity argument) that $\sL_{L^1(G)} \se \sL_G$, hence the implications {\ref{pt:equiv:op}$_{L^1}$}$\Longrightarrow$\ref{pt:equiv:op} and {\ref{pt:equiv:opno}$_{L^1}$}$\Longrightarrow${\ref{pt:equiv:opno}} follow. This inclusion can however be strict, a fact going back to~\cite[\S4]{Raimi59} for $G=\RR$ and generalized in~\cite{Granirer73,Liu-vanRooij,Rudin72} (notwithstanding the incorrect~\cite{Renaud72}).

\bigskip

Consider the dual $\ru(G)^*$ endowed with the dual of the left translation action. There is a completely canonical identification (compare~\cite[p.~177]{Curtis-Figa-Talamanca})
$$ \sL_{L^1(G)}(L^\infty(G))\ \cong\ \ru(G)^*$$
wherein $\alpha\in \sL_{L^1(G)}(L^\infty(G))$ and $m\in \ru(G)^*$ correspond to each other as follows. For $f\in\ru(G)$, we set $m(f) = \alpha(f)(e)$, which makes sense since $\alpha$ must preserve continuous vectors by $G$-equivariance, that is, it preserves $\ru(G)$. In the reverse direction, $\alpha$ is defined from $m$ by
$$\langle\alpha(f), \fhi \rangle = m(f \diamond \fhi) \kern20mm(f\in L^\infty(G), \fhi\in L^1(G))$$
where $(f \diamond \fhi)(s) = \langle \ro(s)f, \fhi\rangle = \int_G \fhi(g) f(gs)\,d g$ for $s\in G$. Here $f\diamond \fhi$ is bounded (by $\|\fhi\|_1 \|f\|_\infty$) and is indeed in $\ru(G)$ (see e.g.~\cite[3.6.3]{Reiter-Stegeman}); alternatively, this is apparent from checking $f \diamond \fhi = \fhi^* * f$.

We point out that the $L^1(G)$-equivariance of $\alpha$ is used in the verification that the assignments $\alpha \leftrightarrow m$ are mutually inverse. The following properties~\ref{pt:delta}-\ref{pt:identification}-\ref{pt:normalisation} are straightforward verifications. As for~\ref{pt:GH}, it can be checked using Cohen's factorisation theorem; we shall not use it, but list it for comparison with Theorem~\ref{thm:classic}\ref{pt:classic:op}.

\begin{enumerate}[label=(\alph*)]
\item $\alpha=\id$ if and only if $m=\delta_e$.\label{pt:delta}
\item The correspondence $\alpha \leftrightarrow m$ is linear, positive, isometric and $G$-equivariant.\label{pt:identification}
\item $\alpha(\one_G)=\one_G$ if and only if $m(\one_G)=1$.\label{pt:normalisation}
\item $\alpha$ is the identity on $L^\infty(G/H)$ if and only if the restriction of $m$ to $\ru(G/H)$ is the Dirac mass at the trivial coset $H$ in $G/H$.\label{pt:GH}
\end{enumerate}

If follows from the definition of the action that $\alpha$ is $H$-fixed if and only if it ranges in $L^\infty(G/H)$. Therefore, points~\ref{pt:identification} and~\ref{pt:normalisation} above are already sufficient to conclude that the conditions~\ref{pt:equiv:mean} and~{\ref{pt:equiv:opno}$_{L^1}$} of Theorem~\ref{thm:equiv} are equivalent.

\begin{rem}\label{rem:Grothendieck}
We have tacitly used the canonical isometric identification of $L^\infty(G/H)$ with a subspace of $L^\infty(G)$. Likewise, this induces a canonical isometric identification of the space $\sL(L^\infty(G), L^\infty(G/H))$ with a subspace of $\sL(L^\infty(G))$. This inclusion fits into an exact sequence
$$0 \lra \sL\big(L^\infty(G), L^\infty(G/H)\big) \lra \sL\big(L^\infty(G)\big) \lra \sL\big(L^\infty(G), J^1(G,H)^*\big) \lra 0$$
where the epimorphism is induced via duality by the inclusion $J^1(G,H)\to L^1(G)$. This follows from the general properties of projective tensor products and $L^1$-spaces established by Grothendieck~\cite{Grothendieck55b} together with the canonical isometric identification of $\sL(L^\infty(G))$ with the dual of the projective tensor product $L^\infty(G) \hat\otimes L^1(G)$.
\end{rem}

The proof of \ref{pt:equiv:op}$\Longrightarrow$\ref{pt:equiv:mean} is in two steps. First, we notice that the above map $\alpha\mapsto m$ is in fact defined on $\sL_G(L^\infty(G))$ since we only used $G$-equivariance to justify that $\alpha(f)$ is continuous and hence can be evaluated at the point~$e$. This extended map is still linear, positive, contractive, $G$-equivariant and satisfies that $\alpha(\one_G)=\one_G$ implies $m(\one_G)=1$. The only verification that fails is that it need not be a \emph{right} inverse to the map $m\mapsto \alpha$. In any case, we obtain an $H$-invariant element $m$ of the dual of $\ru(G)$ such that $m(\one_G)=1$.

Secondly, we observe that the order structure on $\ru(G)$ (and hence on its dual) is $G$-invariant; hence we can replace $m$ by its normalised absolute value $m/|m|$. The condition $m(\one_G)=1$ is preserved since $\one_G$ is the least upper bound for the unit ball in $\ru(G)$.

Notice in passing that we have obtained an equivariant projection of $\sL_G$ onto $\sL_{L^1(G)} \cong\ru(G)^*$; for a description of $\sL_G$ itself in terms of a sort of means, see~\cite[5.3]{Hively73}.

\bigskip

We shall now establish the implication~\ref{pt:equiv:approx}$\Longrightarrow${\ref{pt:equiv:op}$_{L^1}$}. Suppose that $J^1(G,H)$ has a bounded right approximate identity $\{u_i\}_{i\in I}$ in $L^1_0(G)$. Choose an ultrafilter on $I$ dominating the order filter and denote the corresponding ultralimits by $\ulim_i$. We endow $\sL(L^\infty(G))$ with the weak-* topology coming from its identification with the dual of $L^\infty(G) \hat\otimes L^1(G)$. In particular its closed balls are compact by the Banach--Alao\u{g}lu theorem. Therefore, we can define $\alpha=\ulim_i \alpha_i$ where $\alpha_i$ is the operator determined by
$$\langle \alpha_i(f), \fhi\rangle = \langle f, \fhi - \fhi*u_i\rangle \kern20mm(f\in L^\infty(G), \fhi\in L^1(G)).$$
Then $\alpha$ is $L^1(G)$-equivariant and moreover $\alpha(\one_G) = \one_G$ since $\fhi*u_i$ is in $L^1_0(G)$. It remains only to justify that $\alpha(f)$ is right $H$-invariant. Given Remark~\ref{rem:Grothendieck}, it suffices to show that $\langle \alpha_i(f), \fhi\rangle$ converges to zero when $\fhi\in J^1(G,H)$. This follows because  $\fhi - \fhi*u_i$ goes to zero since $\{u_i\}$ is an approximate identity.

\bigskip

Finally we establish~\ref{pt:equiv:mean}$\Longrightarrow$\ref{pt:equiv:approx}. Assume that we have an $H$-invariant mean $m$ on $\ru(G)$ and consider the element $m-\delta_e$ in $\ru(G)^*$. The inclusion $\ru(G)\to L^\infty(G)$ induces a quotient map of $L^\infty(G)^*$ onto $\ru(G)^*$; therefore, Goldstine's theorem provides us with a bounded net $\{v_j\}_{j\in J}$ in $L^1(G)$ such that $\langle f, v_j\rangle$ converges to $m(f)-f(e)$ for all $f\in \ru(G)$. The particular case of $f=\one_G$ shows that $\int v_j$ converges to zero, and therefore we can assume that each $v_j$ lies in $L^1_0(G)$ upon subtracting a net converging to zero in $L^1(G)$.

We claim that for any $x\in J^1(G,H)$, the net $x*v_j - x$ converges weakly to zero in $L^1(G)$; we emphasize that weak convergence requires us to pair this net against any $q$ in $L^\infty(G)$, not just in $\ru(G)$. Let us first explain the (standard) way to conclude the proof from this claim. The ``Mazur trick'' states that from any net converging weakly to zero in a locally convex space one can construct a net of convex combinations converging to zero. Since taking convex combinations preserves boundedness and membership to $L^1_0$, we can apply this to the net $\{(x*v_j - x)_{x\in F}\}_{F, j}$ indexed by finite sets of $F$ in $J^1(G,H)$ and $j\in J$. In particular, we obtain a net of convex combinations of the form $x*u_i - x$ for some net $\{u_i\}_{i\in I}$ of convex combinations of $v_j$, finishing the proof. (The Mazur trick is a direct application of Hahn--Banach and a similar use of it can be found e.g.\ in~\cite[2.4.2]{Greenleafbook}, to which we refer for further details).

In order to prove the claim, fix $x\in J^1(G,H)$ and $q\in L^\infty(G)$. Fubini's theorem and the left invariance of the Haar measure imply
$$\langle q, x*v_j\rangle = \langle q\diamond x, v_j\rangle.$$
We noted earlier that $q\diamond x$ is in $\ru(G)$ and thus the right hand side converges to
$$(m-\delta_e)(q\diamond x) = \langle \alpha(q) - q, x\rangle$$
Since $\alpha$ is $H$-invariant, $\langle \alpha(q), x\rangle$ vanishes and hence the above expression is $\langle q, -x\rangle$, proving the claim.

\section{Proofs/references for Theorem~\ref{thm:classic}}\label{sec:classic}
Just as in Section~\ref{sec:char}, we shall denote by~{\ref{pt:classic:op}$_{L^1}$} and~{\ref{pt:classic:opno}$_{L^1}$} the statements corresponding to points~\ref{pt:classic:op} and~\ref{pt:classic:opno} of Theorem~\ref{thm:classic} with $G$-equivariance replaced by $L^1(G)$-equivariance. We begin by justifying that these four conditions are equivalent.

Indeed, although~\ref{pt:equiv:opno} is formally stronger than~\ref{pt:classic:op}, it is standard that they are equivalent: for instance the same argument as in~\cite[Lemma~5.3.7]{Monod_LN} can be applied. This holds unchanged for {\ref{pt:classic:opno}$_{L^1}$}$\Longleftrightarrow${\ref{pt:classic:op}$_{L^1}$}. Moreover, the existence of a $G$-equivariant map in this setting is equivalent to the existence of a $L^1(G)$-equivariant map for the reasons exposed in Section~\ref{sec:char}; in the present case this is stated explicitly in~\cite[Thm.~1]{Bekka90b}.

\bigskip

The equivalence of~\ref{pt:classic:approx} with amenability is due to Derighetti: Theorem~2 in~\cite{Derighetti78}.

\bigskip
Regarding~\ref{pt:classic:mean}, the equivalence of amenability with the existence of an $H$-invariant mean on $L^\infty(G)$ can be deduced e.g.\ from~\cite[\S3, Prop.~1]{Reiter71} via the usual correspondence between means on $L^\infty(G)$ and nets of positive normalised densities in $L^1(G)$. A mean on $L^\infty(G)$ restricts of course to a mean on $\cb(G)$; we should still recall why the existence of an $H$-invariant mean on $\cb(G)$ implies that $H$ is amenable: the map $\cb(H) \to \cb(G)$ as defined in the proof of Lemma~\ref{lem:open} endows $\cb(H)$ with an invariant mean.

\bigskip
The fixed point property of amenable groups immediately implies~\ref{pt:classic:fixed}. Thus, in summary, it suffices to prove~\ref{pt:classic:fixed}$\Longrightarrow$\ref{pt:classic:opno} and to prove that~\ref{pt:classic:opno} implies that $H$ is amenable. The latter is the most difficult implication of the theorem.

\bigskip
For~\ref{pt:classic:fixed}$\Longrightarrow$\ref{pt:classic:opno}, we endow $\sL(L^\infty(G))$ with the same weak-* topology and $G$-representation by right post-composition as in the above proof of Theorem~\ref{thm:equiv}. We can then consider the convex compact $G$-space $K$ of norm one positive left-$G$-equivariant maps $\alpha\in\sL(L^\infty(G))$ with $\alpha(\one_G)=\one_G$. By assumption, there is an $H$-fixed point $\alpha$ in the closed convex hull of the identity. Like the identity, the map $\alpha$ is $L^\infty(G/H)$-linear since the latter property is $H$-invariant.

\bigskip
Finally, assume that we have a map $\alpha$ as in~\ref{pt:classic:opno}. We shall prove that $H$ is amenable following the ideas of Zimmer~\cite{Zimmer77_vN, Zimmer78b}. We give a complete proof since our setting is slightly different and Zimmer dealt only with discrete groups in~\cite{Zimmer77_vN}, but all the main ideas are taken from Zimmer's work. In order to use ergodic theory, we need to assume for now that $G$ is second countable and we shall indicate at the end of the proof how to reduce to that case.

Our goal is to find an $H$-invariant mean on $\ru(H)$. Since $H$ is second countable, $\ru(H)$ is the directed union of its \emph{separable} closed $H$-invariant subspaces and by a compactness argument it suffices to find an $H$-invariant mean on any such separable subspace $E$. Let $E^*$ be the contragredient module; we endow the set $K\se E^*$ of means with it weak-* topology, turning it into a (non-empty) convex compact $H$-space. We recall that the space $\lw(G, E^*)$ of weak-* measurable bounded function classes on $G$ is dual to the space $L^1(G, E)$ of Bochner-integrable function classes (and likewise on $G/H$). We can define an operator
$$\alpha_E\colon \lw(G, E^*)\lra  \lw(G/H, E^*)$$
by $\langle\alpha_E f, v\rangle= \alpha(\langle f, v\rangle)$ for $v\in E$ since there is a canonical identification of $\lw(G/H, E^*)$ with $\sL(E, L^\infty(G/H))$.

Let $\sigma\colon G/H\to G$ be a Borel section of the projection $G\to G/H$ and define a Borel cocycle $\gamma\colon G\times G/H\to H$ by $\gamma(g,x)=\sigma(gx)\inv g\sigma(x)$. Choose a point $k_0\in K$ and define the map $\psi\colon G\to K$ by $\psi(g) = \sigma(gH)\inv g k_0$, noting that $\sigma(gH)\inv g$ lies in $H$. Then $\psi(\bar g g) = \gamma(\bar g, gH) \psi(g)$ for all $\bar g, g$. We thus obtain an element $\alpha_E\psi\in \lw(G/H, E^*)$. The fact that $\alpha$ is positive implies that $\alpha_E\psi$ still ranges a.e.\ in $K$; indeed it suffices to compose $\psi$ (respectively $\alpha_E\psi$) with the evaluation on a countable dense set of elements $v\in E$ that separate $K$ from any other point in $E^*$.

We claim that $\alpha_E\psi$ is $\gamma$-equivariant in the sense that for any $\bar g\in G$, the equality $\alpha_E\psi(\bar g g H) = \gamma(\bar g, gH) \alpha_E\psi(g H)$ holds for a.e.~$gH$. To this end, we first show that any bounded Bochner-measurable map $V\colon G/H\to E$ satisfies $\langle\alpha_E \psi, V\rangle= \alpha(\langle \psi, V\rangle)$ a.e.\ on $G/H$. Since $V$ is Bochner-measurable, it suffices to verify it for all functions of the form $V=v \one_{A}$ for $v\in E$ and a measurable set $A\se G/H$ considered also as $H$-invariant subset of $G$. Then the definition of $\alpha_E$ together with the $L^\infty(G/H)$-linearity of $\alpha$ implies
$$\langle\alpha_E \psi, v \one_{A}\rangle = \langle \one_A\alpha_E \psi, v\rangle = \langle \alpha_E (\one_A \psi), v \rangle =  \alpha(\langle \one_A \psi, v\rangle) =  \alpha(\langle \psi, v \one_A \rangle)$$
as was to be shown. Now we prove the claim; fix some $\bar g\in G$. We need to show that for all $v\in E$ we have $\langle \lambda(\bar g)(\alpha_E f), v\rangle= \langle \gamma(\bar g, -H) \alpha_E f, v\rangle$ a.e.\ on $G/H$, where $\lambda$ is the left translation representation. By equivariance of $\alpha$, the left hand side is $\alpha(\langle \lambda(\bar g)\psi, v\rangle)$, which is $\alpha(\langle \gamma(\bar g, -H)  \psi, v\rangle)$, that is $\alpha(\langle \psi,  \gamma(\bar g, -H) \inv v\rangle)$. In other words, we need to show $\alpha(\langle  \psi, \gamma(\bar g, -H)\inv v\rangle) =  \langle\alpha_E \psi ,  \gamma(\bar g, -H)\inv v\rangle$, which holds indeed by taking 
$$V(gH):= \gamma(\bar g, gH)\inv v$$ 
in the statement above.

We finally define a weak-* measurable map $\fhi\colon G\to K$ by $\fhi(g) = g \sigma(g\inv H) \alpha_E\psi(g\inv H)$; here $g \sigma(g\inv H) \in H$. The claim implies that for all $\bar g\in G$ we have $\fhi(g\bar g) = \fhi(g)$ for a.e.~$g\in G$. Choosing a countable dense subgroup $\Lambda<G$, we have a conull set of $g\in G$ for which $\fhi(g\bar g) = \fhi(g)$ holds for all $\bar g\in \Lambda$. Since $E$ is separable, it follows from the ergodicity of the $\Lambda$-action on $G$ that there is $k\in K$ with $\fhi(g)=k$ for a.e.~$g\in G$. But by construction, we have $\fhi(hg) = h\fhi(g)$ for all $h\in H$ and a.e.~$g\in G$; therefore, $k$ is an $H$-fixed point in $K$.

\bigskip
It remains only to justify how to reduce to the case where $G$ is second countable. We first claim that we can assume $G$ compactly generated. Indeed, let $G_U<G$ be the subgroup generated by some compact neighbourhood $U\se G$ of the identity and set $H_U = H\cap G_U$. Then the compactly generated subgroup $G_U$ is open in $G$. Therefore, $L^\infty(G_U)$ can be viewed as a $G_U$-equivariantly complemented subspace of $L^\infty(G)$, and likewise for $L^\infty(G_U/H_U)$ in $L^\infty(G/H)$. To prove the claim, we assume that there is a $G$-equivariant conditional expectation from $L^\infty(G)$ to $L^\infty(G/H)$. The $L^\infty(G/H)$-linearity implies that the image of $L^\infty(G_U)$ is in $L^\infty(G_U/H_U)$, as seen by multiplying with $\one_{G_U H}$. Thus we have in particular a $G_U$-equivariant conditional expectation from $L^\infty(G_U)$ to $L^\infty(G_U/H_U)$. Supposing the compactly generated case settled, we conclude that $H_U$ is amenable. As $U$ varies, the groups $H_U$ form a directed family whose union is $H$, which implies that $H$ is itself amenable as claimed.

\smallskip
We thus assume $G$ compactly generated and reduce to the second countable case. Since $G$ is in particular $\sigma$-compact, it admits a compact normal subgroup $K\triangleleft G$ with $G/K$ second countable, see~\cite[Satz~6]{KakutaniKodaira}. It suffices to prove that $HK/K$ is amenable. However, any $G$-equivariant conditional expectation from $L^\infty(G)$ to $L^\infty(G/H)$ restricts to a $G/K$-equivariant conditional expectation from $L^\infty(G/K)$ to $L^\infty(G/HK)$ since $K$ is normal. This finishes the reduction to the second countable case.

\section{Stability properties of the class $\cx$}
\label{sec:closure}
We first record the elementary \emph{proof of Proposition~\ref{prop:clarify}}. By design, if $G$ admits a non-empty convex compact $G$-space with amenable stabilisers, then any relatively amenable subgroup is amenable. Suppose conversely that relative amenability implies amenability for all closed subgroups of $G$. Then Condition~\ref{pt:equiv:mean} of Theorem~\ref{thm:equiv} shows that the space of means on $\ru(G)$ has only amenable stabilisers.\qed

\medskip
The proof of Theorem~\ref{thm:closure} requires a number of preparations. We start with the following property of amenable actions.

\begin{prop}\label{prop:moy-convexe}
Let $G$ be a locally compact group with a continuous action on a compact space $Z$. Let $K\se \cc(Z)^*$ be the convex compact $G$-space of probability measures on $Z$ with the weak-* topology.

Then the $G$-action on $Z$ is amenable if and only if the $G$-action on $K$ is amenable.
\end{prop}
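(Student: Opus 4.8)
The plan is to argue directly from the net-of-measures description of amenable actions~\cite{Anantharaman02}: a continuous action of $G$ on a compact space $X$ is amenable exactly when there is a net $(m_i)$ of norm-continuous maps $m_i\colon X\to L^1(G)$ taking values among the positive functions of integral one and satisfying $\sup_{x\in X}\norms{g\cdot m_i^x-m_i^{gx}}_1\to 0$ for every $g\in G$, uniformly on compact subsets of $G$. I would then establish the two implications separately: the one asserting that amenability of $G\acts K$ descends to $G\acts Z$ is purely formal, while the converse rests on an averaging (barycentre) construction over the simplex $K$.

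For the descent, observe that the Dirac map $\delta\colon Z\to K$, $z\mapsto\delta_z$, is weak-* continuous, injective and $G$-equivariant (one has $g\cdot\delta_z=\delta_{gz}$); since $Z$ is compact and $K$ is Hausdorff, $\delta$ is a homeomorphism onto a closed $G$-invariant subset $\delta(Z)\cong Z$ of $K$. Amenability of an action restricts to closed invariant subspaces — one simply restricts a witnessing net $(m_i)$ for $G\acts K$ to $\delta(Z)$, which only decreases the supremum in the asymptotic-equivariance estimate — and transporting the restricted net back along $\delta$ exhibits $G\acts Z$ as amenable.

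For the converse, suppose $G\acts Z$ is amenable with witnessing net $(m_i)$ as above, and define $\widetilde m_i\colon K\to L^1(G)$ by the barycentre formula
$$\widetilde m_i(\nu)=\int_Z m_i^z\,\d\nu(z),$$
a Bochner integral which makes sense because $z\mapsto m_i^z$ is norm-continuous on the compact space $Z$ and $\nu$ is a finite measure. Then $\widetilde m_i(\nu)\ge 0$ and, by Tonelli's theorem, $\norms{\widetilde m_i(\nu)}_1=\int_Z\norms{m_i^z}_1\,\d\nu(z)=1$, so $\widetilde m_i$ ranges in $\mathrm{Prob}(G)\cap L^1(G)$. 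Using the change-of-variables formula for the $G$-action on $K$ one computes $\widetilde m_i(g\nu)=\int_Z m_i^{gz}\,\d\nu(z)$ and $g\cdot\widetilde m_i(\nu)=\int_Z g\cdot m_i^z\,\d\nu(z)$, whence
$$\norms{g\cdot\widetilde m_i(\nu)-\widetilde m_i(g\nu)}_1\le\int_Z\norms{g\cdot m_i^z-m_i^{gz}}_1\,\d\nu(z)\le\sup_{z\in Z}\norms{g\cdot m_i^z-m_i^{gz}}_1,$$
which tends to $0$ uniformly in $\nu\in K$ and in $g$ over compacta. Hence $(\widetilde m_i)$ witnesses the amenability of $G\acts K$, provided it is a net of \emph{continuous} maps, and this continuity is the one point requiring care: $\widetilde m_i$ must be continuous from $K$ with its weak-* topology to $L^1(G)$ with the norm topology. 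I would deduce it from a compactness trick: the image $\{m_i^z:z\in Z\}$ is norm-compact in $L^1(G)$, so by Mazur's theorem its closed convex hull is norm-compact, and it contains every value $\widetilde m_i(\nu)$; since $\nu\mapsto\widetilde m_i(\nu)$ is plainly continuous into $L^1(G)$ with its \emph{weak} topology (pair against $q\in L^\infty(G)$ and use that $z\mapsto\ip{m_i^z,q}$ is continuous on $Z$), and a net lying in a norm-compact set which converges weakly converges in norm, the map $\widetilde m_i$ is norm-continuous. I expect no conceptual obstacle beyond this bookkeeping about the topology on $\mathrm{Prob}(G)$.
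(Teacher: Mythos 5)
Your proof is correct and follows essentially the same route as the paper: both directions are handled identically (restriction along the Dirac embedding for descent, and extension of the witnessing data from $Z$ to $K$ by integrating against $\nu$, with the built-in uniformity over the compact space $Z$ carrying over to $K$). The only divergence is technical rather than conceptual: the paper invokes the $\mathrm{C}_c(Z\times G)$ formulation of Proposition~2.2(3) in~\cite{Anantharaman02}, for which continuity and compact support of the extended functions are immediate, whereas your choice of the $L^1(G)$-valued formulation obliges you to supply the (correct) Mazur-compactness argument upgrading weak continuity of the barycentre map to norm continuity.
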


\begin{proof}
We recall that the natural $G$-map $Z\to K$ defined by Dirac masses is a homeomorphism on its image; in particular, the action on $Z$ is amenable if the action on $K$ is so. For the converse, suppose that the action on $Z$ is amenable. We shall work with the criterion of~(3) in Proposition~2.2 in~\cite{Anantharaman02}. That is, there is a net $\{g_i\}_{i\in I}$ of compactly supported functions $g_i\colon Z\times G\to\RR_+$ such that

\smallskip
\begin{enumerate}[label=(\arabic*), itemsep=1ex]
\item $\lim_i \sup_{z\in Z} \Big|1-\int_G g_i(z,t)\,dt \Big| =0$,\label{pt:moy-c1}
\item $\lim_i \sup_{z\in Z, s\in S} \int_G \big| g_i(sz,st) - g_i(z,t)\big| \,dt =0$ for any compact $S\se G$.\label{pt:moy-c2}
\end{enumerate}

\smallskip

We can extend $g_i$ by convexity to $\widetilde g_i\colon K\times G\to\RR_+$ by setting $\widetilde g_i(\mu, s) = \int_Z g_i(z,s)\, d\mu(z)$ for $\mu\in K, s\in G$. The integral makes sense since $g_i$ is continuous; moreover $\widetilde g_i$ remains continuous  and compactly supported. The two conditions~\ref{pt:moy-c1} and~\ref{pt:moy-c2} above are inherited by $\widetilde g_i$ because of the built-in uniformity of the convergence.
\end{proof}

The following consequence will be relevant. 

\begin{cor}\label{cor:moduloamen}
Let $G$ be a locally compact group amenable at infinity and $N\lhd G$ an amenable closed normal subgroup. Then $G/N$ is amenable at infinity.
\end{cor}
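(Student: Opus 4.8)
The plan is to use the characterisation of amenability at infinity in terms of the convex compact space of probability measures, combined with Proposition~\ref{prop:moy-convexe}, so that the task is transferred into the world of convex compact $G$-spaces where amenability is a fixed-point property and behaves well under quotients. Concretely: since $G$ is amenable at infinity, there is a compact space $Z$ with a continuous amenable $G$-action. Let $K\subseteq \cc(Z)^*$ be the associated convex compact $G$-space of probability measures; by Proposition~\ref{prop:moy-convexe}, the $G$-action on $K$ is amenable. We want to produce, from $K$, a convex compact $G/N$-space on which the $G/N$-action is amenable; applying Proposition~\ref{prop:moy-convexe} in the reverse direction (to a suitable compact $G/N$-space sitting inside it) will then conclude.

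First I would form the space $K^N$ of $N$-fixed points in $K$. Since $N$ is amenable and acts continuously and affinely on the non-empty convex compact set $K$, the fixed-point set $K^N$ is non-empty; it is also convex, compact, and — because $N$ is normal in $G$ — invariant under the $G$-action, hence factors through a continuous affine $G/N$-action on $K^N$. The heart of the argument is then to check that the $G/N$-action on $K^N$ is amenable. Here I would again use the functional criterion (Proposition~2.2(3) in~\cite{Anantharaman02}): starting from a net $\{g_i\}$ of compactly supported continuous functions $G\times(\text{base of }K)\to\RR_+$ witnessing amenability of $G\acts K$, I would average over $N$ — replacing $g_i(k,t)$ by an integral of $g_i(k, nt)$ against an approximate invariant density on $N$, or more cleanly, invoke the already-established Proposition~\ref{prop:moy-convexe} on an $N$-amenable compact space to descend cleanly — to produce functions on $G/N \times K^N$ satisfying conditions~\ref{pt:moy-c1} and~\ref{pt:moy-c2} for compact subsets of $G/N$. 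The uniformity built into those two conditions is what makes the averaging legitimate.

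Alternatively, and perhaps more economically, I would argue directly with convex compact spaces and bypass re-deriving the functional criterion: a convex compact $G/N$-space $L$ has amenable $G/N$-action iff for every convex compact $G/N$-space $M$ there is a $G/N$-equivariant continuous (affine, or measurable — whichever formulation of amenable action one adopts) map $L\to \mathrm{Prob}(M)$; pulling $M$ back as a $G$-space through $G\to G/N$ and using amenability of $G\acts K$ gives a $G$-map $K\to\mathrm{Prob}(M)$, which restricted to $K^N$ and composed with the $N$-averaging projection $\mathrm{Prob}(M)\to\mathrm{Prob}(M)^N=\mathrm{Prob}(M)$ (as $N$ acts trivially on $M$) yields the desired $G/N$-map $K^N\to\mathrm{Prob}(M)$. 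Finally, to conclude ``$G/N$ is amenable at infinity'' in the literal sense of a compact space, I would take any closed $G/N$-orbit closure inside $K^N$, or more simply observe that the continuous $G/N$-action on the compact space $K^N$ is amenable and appeal once more to Proposition~\ref{prop:moy-convexe} (its easy direction: amenability of the action on the space of measures implies amenability of the action on the space itself, via the Dirac embedding) — but since $K^N$ is already a compact $G/N$-space with amenable action, it directly witnesses that $G/N$ is amenable at infinity.

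The main obstacle I anticipate is the passage to $N$-fixed points preserving amenability of the action: one must be careful that the averaging operation over the amenable group $N$ interacts correctly with both the normalisation condition~\ref{pt:moy-c1} and the near-equivariance condition~\ref{pt:moy-c2}, and that the resulting objects are still genuinely supported/continuous on the quotient $G/N$ rather than merely on $G$. Everything else — non-emptiness and $G/N$-invariance of $K^N$, continuity of the induced action, the final invocation of Proposition~\ref{prop:moy-convexe} — is routine.
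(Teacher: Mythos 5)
Your proposal follows essentially the same route as the paper: Proposition~\ref{prop:moy-convexe} produces a convex compact $G$-space $K$ with amenable action, amenability of $N$ gives $K^N\neq\emptyset$, and the amenable $G$-action on the closed invariant subset $K^N$ descends to $G/N$ by pushing the measures on $G$ forward to $G/N$. The only caveat is that this last step needs no ``approximate invariant density on $N$'': plain integration over the $N$-cosets (i.e.\ the pushforward $\mathrm{Prob}(G)\to\mathrm{Prob}(G/N)$) already preserves conditions~\ref{pt:moy-c1} and~\ref{pt:moy-c2} because $N$ acts trivially on $K^N$, the amenability of $N$ being used only to make $K^N$ non-empty.
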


\begin{proof}
By Proposition~\ref{prop:moy-convexe}, there is a non-empty convex compact $G$-space $K$ on which $G$ acts amenably. Since $N$ is amenable, the convex compact $G$-space of $N$-fixed points $K^N$ is non-empty; it is still amenable since it is a subspace of $K$. On the other hand, it is a $G/N$-space; its amenability as $G$-space implies its amenability as $G/N$-space. (The latter fact is most apparent if one uses the Definition~2.1 in~\cite{Anantharaman02} and projects measures from $G$ to $G/N$.)
\end{proof}

We next record that relative amenability is inherited by restriction to open subgroups. 

\begin{lem}\label{lem:open}
Let $G$ be a locally compact group, $H<G$ a closed subgroup and $O<G$ an open subgroup. If $H$ is amenable relative to $G$, then $H\cap O$ is amenable relative to $O$.
\end{lem}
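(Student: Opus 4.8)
The plan is to use the relative fixed point characterization, condition \ref{pt:equiv:fixed} of Theorem~\ref{thm:equiv}, combined with an induction-type construction passing from convex compact $O$-spaces to convex compact $G$-spaces. Concretely, given a non-empty convex compact $O$-space $L$, I want to build a non-empty convex compact $G$-space $K$ together with an $O$-map (continuous affine) $K\to L$; then the hypothesis that $H$ fixes a point in every convex compact $G$-space gives an $H$-fixed point in $K$, and pushing it to $L$ produces an $H\cap O$-fixed point, since $H\cap O$ acts on $L$ through its inclusion into $O$ and the map is $O$-equivariant. Actually I only need the map $K\to L$ to be $(H\cap O)$-equivariant, but constructing it $O$-equivariantly is just as natural.

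The construction of $K$ from $L$ is the coinduction (a.k.a.\ induced space): take $K$ to be the space of continuous affine maps $f\colon G\to L$ satisfying $f(og)=of(g)$ for all $o\in O$, $g\in G$ — equivalently, continuous sections of the associated bundle over the discrete-in-the-relevant-sense coset space $O\backslash G$. Since $O$ is open, $O\backslash G$ is discrete, so such an $f$ is just a choice of one element of $L$ for each coset, i.e.\ $K$ is a product of copies of $L$ indexed by $O\backslash G$; this is visibly non-empty, convex, and compact in the product topology, and it sits inside the locally convex space $\prod_{O\backslash G} V$ where $L\subseteq V$. The group $G$ acts by right translation, $(g\cdot f)(x)=f(xg)$, and this action is continuous and affine: continuity is where openness of $O$ is used again, since each coordinate map only involves finitely many (indeed one) coordinate at a time and $G$ permutes the discrete index set $O\backslash G$ continuously with open stabilizers. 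The $O$-map $K\to L$ is evaluation at the identity coset, $f\mapsto f(e)$, which is continuous, affine, and intertwines the $O$-action on $K$ with the given $O$-action on $L$.

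With $K$ in hand the argument closes quickly: $H$ is amenable relative to $G$, so by Theorem~\ref{thm:equiv}\ref{pt:equiv:fixed} there is $f_0\in K$ fixed by $H$; then $f_0(e)\in L$ is fixed by $H\cap O$ because for $h\in H\cap O$ we have $h\cdot f_0 = f_0$ in $K$, hence $f_0(e) = (h\cdot f_0)(e) = f_0(h) = h f_0(e)$ using the defining property $f_0(og)=of_0(g)$ with $o=h\in O$, $g=e$. Since $L$ was an arbitrary non-empty convex compact $O$-space, $H\cap O$ is amenable relative to $O$.

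The main thing to be careful about — and the step I expect to need the most attention — is verifying that the right-translation $G$-action on $K$ is genuinely continuous as a map $G\times K\to K$ (not merely that each $g$ acts continuously), and that $K$ as built really is a convex compact subset of a Hausdorff locally convex space in the sense of the paper's definition. Here openness of $O$ is essential: it makes $O\backslash G$ discrete so that $K\cong L^{O\backslash G}$ with the product topology, on which the permutation action of $G$ (with open point stabilizers) together with the given continuous $O$-action on $L$ assemble into a jointly continuous affine $G$-action; without openness one would have to topologize spaces of continuous sections and the coinduction is more delicate. Everything else — non-emptiness, convexity, compactness via Tychonoff, and the equivariance of evaluation at $e$ — is routine.
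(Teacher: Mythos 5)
Your proof is correct, but it takes a genuinely different route from the paper's. You argue directly from the fixed-point definition (condition \ref{pt:equiv:fixed} of Theorem~\ref{thm:equiv}): given a non-empty convex compact $O$-space $L$, you coinduce it to a convex compact $G$-space $K\cong L^{O\backslash G}$ and pull an $H$-fixed point back through evaluation at the trivial coset. Openness of $O$ enters exactly where you flag it: it makes $O\backslash G$ discrete, so that the equivariant maps $G\to L$ are automatically continuous and $K$ is a plain Tychonoff product, and it makes the right-translation action jointly continuous, since near any $g_0$ each coordinate of $g\cdot f$ is obtained by applying the (jointly continuous) $O$-action on $L$ to a single fixed coordinate of $f$, via the continuous $O$-valued map $g\mapsto g_i g g_{j}^{-1}$. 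The paper instead uses the mean characterisation, condition \ref{pt:equiv:mean} of Theorem~\ref{thm:equiv}: a Bruhat function for $O<G$ produces a positive, unital, norm-one, $O$-equivariant map $\cb(O)\to\cb(G)$; openness is used only to check that this map carries $\ru(O)$ into $\ru(G)$, so that an $H$-invariant mean on $\ru(G)$ pulls back to an $(H\cap O)$-invariant mean on $\ru(O)$. Your argument is more elementary and self-contained (it never needs Theorem~\ref{thm:equiv}, only the definition of relative amenability), while the paper's construction has the advantage that its first half works for an arbitrary closed subgroup $O$ and is reused in Section~\ref{sec:classic}. Two cosmetic remarks: the elements of $K$ are simply equivariant (hence automatically continuous) maps $G\to L$ rather than ``affine'' ones, since $G$ carries no convex structure; and to fit the paper's definition of a convex compact $G$-space verbatim you should note that any compact convex set with a jointly continuous affine $G$-action embeds equivariantly and affinely into the dual of its space of continuous affine functions, equipped with the dual linear representation and the weak-* topology, so the required ambient locally convex space and affine representation can always be arranged. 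Neither remark affects the validity of your proof.
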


\begin{proof}
At first we shall only use that $O$ is a closed subgroup. Choose a left Haar measure on $O$ and let $\beta\colon G\to \RR_+$ be a Bruhat function. Recall that this means a continuous function with the following two properties: (1)~for each compact subset $Q\se G$, the support of $\beta$ meets $OQ$ in a compact subset; (2)~for all $g\in G$ one has $\int_O \beta(o\inv g)\,do = 1$. Bruhat functions exist, see e.g.~\cite[Ch.~8 \S1.9]{Reiter68} (with a different notation). Given $f\in \cb(O)$, we define a function $\tilde f$ on $G$ by $\tilde f(g) = \int_O \beta(o\inv g) f(o)\, do$. Then the map $f\mapsto \tilde f$ is a positive norm one $O$-equivariant linear map $\cb(O) \to \cb(G)$ with $\widetilde{\one_O} = \one_G$. (For the verification that  $\tilde f$ is indeed continuous, see e.g. the proof of Proposition~1.12~\cite{Paterson}.)  

We now show that $\tilde f\in \ru(G)$ whenever $f\in\ru(O)$; this is where we shall use that $O$ is open in $G$. We need to prove the continuity of the orbit map $G\to L^\infty(G)$ associated to $\tilde f$. Since $O$ is open, we can restrict this orbit map to $O\to L^\infty(G)$. By hypothesis the orbit map $O \to L^\infty(O)$ associated to $f$ is continuous; the desired assertion follows since the map $f \mapsto \tilde f$ is $O$-equivariant and of norm one.

Finally, the last assertion of the Lemma follows from Condition~\ref{pt:equiv:mean} of Theorem~\ref{thm:equiv}.
\end{proof}

It is well-known that amenability is stable under forming group extensions. The following lemma is a slight variation of that fact. 

\begin{lem}\label{lem:co-amen}
Let $G, H$ be   locally compact groups, $O < G$ be an open subgroup and  $\varphi \colon H \to G$ be a continuous homomorphism.  Then $H$ is amenable if and only if $\overline{\varphi(H)}$ and $\varphi\inv(O)$ are both amenable. 
\end{lem}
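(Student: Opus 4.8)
The plan is to prove both implications by a careful bookkeeping of which amenability hypotheses are needed, reducing everything to the classical extension stability of amenability for locally compact groups.

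First I would treat the ``only if'' direction. Assume $H$ is amenable. Then $\overline{\varphi(H)}$ is amenable because it is the closure of a continuous homomorphic image of an amenable group, and amenability passes to quotients and to closures of images (a continuous homomorphic image of an amenable group is amenable, and the closure of an amenable subgroup of a locally compact group is amenable). For $\varphi\inv(O)$: this is a subgroup of $H$, but an \emph{open} one, since $O$ is open in $G$ and $\varphi$ is continuous, so $\varphi\inv(O)$ is open in $H$. An open subgroup of an amenable locally compact group is amenable (for instance because amenability is inherited by closed subgroups, and an open subgroup is closed). Hence both groups are amenable.

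Now the ``if'' direction, which I expect to be the substantive part. Assume $N:=\overline{\varphi(H)}$ and $L:=\varphi\inv(O)$ are amenable; we want $H$ amenable. Set $K = \varphi\inv(N)$... no: rather, consider the closed normal subgroup $M:=\ker\varphi\lhd H$ and the closed subgroup $\varphi\inv(O)=L$. The key point is that $L$ is open in $H$ and $H/M\cong \varphi(H)$ is a dense subgroup of $N$, hence amenable (a subgroup of an amenable group need not be amenable in general, but a \emph{dense} subgroup of a locally compact amenable group is amenable, since amenability of a locally compact group is detected by its dense subgroups — or more simply because $\varphi(H)$ carries the subspace topology from $N$ and any invariant mean on $\ru(N)$ restricts appropriately). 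Thus it suffices to show: if $H$ has an open subgroup $L$ and a closed normal subgroup $M$ with $M$ amenable and $H/M$ amenable, then $H$ is amenable — but that is exactly the classical fact that amenability is stable under extensions, applied to $1\to M\to H\to H/M\to 1$. Wait: here I am not even using $L$. Let me reconsider: the subtlety the lemma is designed to capture is precisely that $\overline{\varphi(H)}$ amenable does \emph{not} give $H/\ker\varphi$ amenable unless we know something more, and $\varphi\inv(O)$ amenable is what supplies ``the missing mass near the kernel''.

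So the correct route for ``if'' is: let $L=\varphi\inv(O)$, an open subgroup of $H$, and note $\varphi(L)\le O$, while $\varphi(L)$ is open in $\overline{\varphi(H)}=N$ (as $\varphi$ restricted to $L$ hits an open subset of $\varphi(H)$, which is dense in $N$). Now $L$ is a locally compact group with a continuous homomorphism $\varphi|_L\colon L\to O$; its kernel $\ker(\varphi|_L)=\ker\varphi$ sits inside $L$, and $\overline{\varphi(L)}$ is an open, hence closed, subgroup of the amenable group $N$, so $\overline{\varphi(L)}$ is amenable; moreover $\ker\varphi$ is amenable since $\ker\varphi$ is a closed subgroup of $\overline{\varphi(H)}$... no, $\ker\varphi$ is not a subgroup of $N$. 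The honest argument: $\ker\varphi$ is a closed \emph{normal} subgroup of $H$ and we have an injection $H/\ker\varphi\hookrightarrow G$ with dense image in $N$; since $N$ is amenable and amenability passes to dense subgroups of locally compact groups, $H/\ker\varphi$ is amenable; and $\ker\varphi=\ker(\varphi|_L)$ is an open subgroup of $\ker(\varphi)\cdot$(nothing) — rather, $\ker\varphi\cap L=\ker\varphi$ so $\ker\varphi\le L$, and $\ker\varphi$ is the kernel of $\varphi|_L\colon L\to O$ where $O$ is... I would instead simply apply the classical extension result directly: $H$ is an extension $1\to\ker\varphi\to H\to \overline{H/\ker\varphi\hookrightarrow N}$, so $H$ is amenable as soon as $\ker\varphi$ and $H/\ker\varphi$ are, and $\ker\varphi$ amenable is the one hypothesis I must still extract. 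And $\ker\varphi$ \emph{is} a closed subgroup of $L=\varphi\inv(O)$, in fact $\ker\varphi=\ker(\varphi|_L)$ which is closed normal in $L$; since $L$ is assumed amenable, $\ker\varphi$ is amenable. That closes the loop: $\ker\varphi$ amenable (closed subgroup of amenable $L$), $H/\ker\varphi$ amenable (dense in amenable $N$), hence $H$ amenable by extension stability.

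The main obstacle, and the one point worth writing out carefully, is the passage from ``$\overline{\varphi(H)}$ amenable'' to ``$H/\ker\varphi$ amenable'': one must justify that $\varphi(H)$, with its subspace topology (which may differ from a locally compact group topology, and in which $\varphi(H)$ need not be locally compact), being dense in the amenable locally compact group $N$, is itself amenable, and that $H/\ker\varphi$ carries the same topology as $\varphi(H)$ when $\varphi$ is merely continuous — this last point may require an open mapping argument or a direct fixed-point verification, transporting an $H$-fixed point in a convex compact $H$-space to an $N$-fixed point after noting the $H$-action factors through $\varphi(H)$ and extends by density and continuity to $N$. I would handle it by working directly with the fixed-point definition: given a convex compact $H$-space, the $\ker\varphi$-fixed points form a non-empty (by amenability of $\ker\varphi$) convex compact set on which the $H$-action factors through $\varphi(H)\subseteq N$ continuously, and by density and compactness extends to a continuous $N$-action, which has a fixed point since $N$ is amenable; this fixed point is $H$-fixed, proving $H$ amenable without ever invoking extension theorems or topological subtleties about quotients.
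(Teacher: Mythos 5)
Your ``only if'' direction is fine, and so is the observation that $\ker\varphi$ is amenable as a closed subgroup of the amenable group $\varphi^{-1}(O)$. But the ``if'' direction has a genuine gap at its decisive step: the claim that the $H$-action on the convex compact set of $\ker\varphi$-fixed points, which factors through $\varphi(H)$ as an abstract group, ``by density and compactness extends to a continuous $N$-action'' of $N=\overline{\varphi(H)}$. An action of a dense subgroup on a compact convex set does not extend to the closure in general. The standard counterexample to precisely this step is a free group $F_2$ embedded densely in $\mathrm{SO}(3)$: there $\ker\varphi$ is trivial and $N=\mathrm{SO}(3)$ is compact, hence amenable, so if every convex compact $F_2$-space extended to a continuous $\mathrm{SO}(3)$-action, it would have a fixed point and $F_2$ would be amenable. (This does not contradict the lemma: $\mathrm{SO}(3)$ is connected, so its only open subgroup is $\mathrm{SO}(3)$ itself and $\varphi^{-1}(O)=F_2$ is not amenable. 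But it shows that your argument, which uses the amenability of $\varphi^{-1}(O)$ only to control $\ker\varphi$, cannot close.) The same example defeats your fallback claim that $H/\ker\varphi$ is amenable because it maps bijectively onto a dense subgroup of the amenable group $N$: the quotient topology on $H/\ker\varphi$ may be strictly finer than the subspace topology on $\varphi(H)$ (discrete $F_2$ versus $F_2\subset\mathrm{SO}(3)$), and amenability of the group with the coarser topology gives nothing for the finer one.

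The missing idea, which is how the paper proceeds, is to use the full subgroup $P=\varphi^{-1}(O)$ rather than just its subgroup $\ker\varphi$, and to exploit that the relevant homogeneous space is \emph{discrete}. Setting $G_1=\overline{\varphi(H)}$ and $O_1=G_1\cap O$, the map $\varphi$ induces an $H$-equivariant bijection of discrete sets $H/P\to G_1/O_1$ (discreteness because $O_1$ is open in $G_1$; density of $\varphi(H)$ gives surjectivity). Since $G_1$ is amenable, its action on $G_1/O_1$ preserves a mean, which pulls back to an $H$-invariant mean on $H/P$; in other words $P$ is co-amenable in $H$. Because the coset space is discrete, the topological mismatch between $\varphi(H)$ and $H/\ker\varphi$ that sinks your argument never arises. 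Finally, $P$ amenable together with $P$ co-amenable in $H$ yields that $H$ is amenable. If you want to salvage your write-up, replace the extension-by-density step with this passage through the discrete space $H/\varphi^{-1}(O)$.
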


\begin{proof}
The `only if' part is clear.
Assume conversely that $P=\varphi\inv(O)$ and $\overline{\varphi(H)}$ are both amenable. In order to deduce that $H$ is amenable, it suffices to show that $P$ is \emph{co-amenable} in $H$, i.e. that the transitive $H$-action on $H/P$ preserves a mean  (see e.g.~\cite{Eymard72,Monod-Popa} for more on this notion). By hypothesis $G_1  =\overline{\varphi(H)}$ is amenable, hence the $G_1$-action on $G_1/O_1$ preserves a mean, where $O_1 = G_1 \cap O$. The desired assertion follows since the homomorphism $\varphi \colon H \to G_1$ has dense image, and hence induces an $H$-equivariant bijection of discrete sets $H/P \to G_1/O_1$. 
\end{proof}

We record the following subsidiary fact, which relies on a combination of the previous two lemmas. 

\begin{lem}\label{lem:UN}
Let $G$ be a locally compact group, $N \lhd G$ be a closed normal subgroup, and $O< G$ be an open subgroup containing $N$. If $G/N$ and $O$ both belong to $\cx$, then so does $G$. 
\end{lem}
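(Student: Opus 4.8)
The plan is to prove $G\in\cx$ by exhibiting a non-empty convex compact $G$-space all of whose point stabilisers are amenable, using Proposition~\ref{prop:clarify}(ii) as the target criterion. Since $O\in\cx$, it carries such a space; call it $K_O$, a non-empty convex compact $O$-space with amenable stabilisers. Since $G/N\in\cx$, it carries a non-empty convex compact $G/N$-space $K_{G/N}$ with amenable stabilisers, which we regard as a $G$-space via $G\to G/N$; here a point stabiliser in $G$ is the preimage of an amenable subgroup of $G/N$, i.e.\ an amenable-by-$N$ group, but since we will only use this to control stabilisers modulo $N$, that is fine. The idea is to combine these: first induce the $O$-space $K_O$ up to $G$ to kill the gap between $O$ and $G$, then take a product with $K_{G/N}$ to handle the directions that $K_O$ cannot see.

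First I would form the induced convex compact $G$-space from $K_O$. Because $O$ is open of finite covolume-free index issues aside, the standard construction of a co-induced affine action applies: concretely, one can realise the space of $O$-equivariant maps $G\to K_O$ (which is non-empty, convex, and weak-$*$ compact once one embeds $K_O$ in an appropriate locally convex space) with the $G$-action by translation in the source. Call this $\widetilde K$. The key point is the computation of stabilisers: if $g\in G$ fixes a point $\Phi\in\widetilde K$, then for each coset representative $s$ the element $s\inv g s$ (when it lies in $O$) fixes the value $\Phi(s)\in K_O$, so $\mathrm{Stab}_G(\Phi)$ is, up to conjugation and intersection with $O$, contained in stabilisers of points of $K_O$, hence virtually contained in amenable subgroups along $O$. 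More precisely $\mathrm{Stab}_G(\Phi)\cap O$ is amenable for a suitable choice, and in general $\mathrm{Stab}_G(\Phi)$ maps to $O$ via the coset action in a controlled way. This is essentially Lemma~\ref{lem:open} run in reverse together with Lemma~\ref{lem:co-amen}: co-amenability of $O$ in $G$ is automatic since $G/O$ is discrete and one only needs the $O$-part to be amenable to conclude amenability of the whole stabiliser.

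Then I would take $\widetilde K \times K_{G/N}$ with the diagonal $G$-action; this is again a non-empty convex compact $G$-space. A stabiliser of a point $(\Phi, \xi)$ is $\mathrm{Stab}_G(\Phi)\cap \mathrm{Stab}_G(\xi)$. By the previous paragraph, intersecting with $O$ gives an amenable group (a subgroup of an amenable stabiliser in $K_O$, transported through the induction); and $\mathrm{Stab}_G(\xi)$ is the preimage in $G$ of an amenable subgroup of $G/N$. The point is that $N\le O$, so $N$ is contained in $\mathrm{Stab}_G(\Phi)\cap O$'s ambient group $O$, and the image of $\mathrm{Stab}_G(\Phi)\cap\mathrm{Stab}_G(\xi)$ in $G/N$ is amenable while its intersection with $N$ is amenable (being inside an amenable subgroup of $O$); by the extension-stability of amenability the whole group is amenable. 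One can also phrase this via Lemma~\ref{lem:co-amen}: writing $S=\mathrm{Stab}_G(\Phi)\cap\mathrm{Stab}_G(\xi)$, the subgroup $S\cap O$ is open in $S$ and amenable, and $\overline{\varphi(S)}$ in $G/N$ is amenable for $\varphi$ the quotient map, so $S$ is amenable. Hence all stabilisers are amenable and $G\in\cx$.

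The main obstacle I expect is making the induction construction and the stabiliser bookkeeping rigorous in the non-discrete, non-finite-index setting: one must choose the right locally convex embedding of $K_O$ so that the space of $O$-equivariant maps $G\to K_O$ is genuinely weak-$*$ compact and the $G$-action continuous, and one must track precisely how $\mathrm{Stab}_G(\Phi)$ meets the various $O$-conjugates. A cleaner route, which I would try first, is to avoid explicit co-induction altogether: apply Theorem~\ref{thm:closure}\ref{pt:class:open} to reduce $G\in\cx$ to $O\in\cx$ directly — but that reduction needs $N$ open, which we do not have, so it does not apply. Failing that, I would lean on the already-proven parts of Theorem~\ref{thm:closure}, noting that $G/N\in\cx$ and $O\in\cx$ with $O/N$ open in $G/N$; then $G/N\in\cx$ together with $O/N\in\cx$ (inherited from $O\in\cx$ by \ref{pt:class:open} applied inside, since $N$ is open in $O$) and an application of \ref{pt:class:open} to the pair $N\lhd O$ in reverse, combined with \ref{pt:class:td:discrete}-style extension arguments, should close the loop; the honest difficulty is organising these reductions without circularity, and that is the step I would spend the most care on.
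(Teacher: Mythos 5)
Your argument is essentially correct, but it takes a genuinely different route from the paper. The paper verifies condition~(i) of Proposition~\ref{prop:clarify} directly: given a closed relatively amenable subgroup $H<G$, it notes that $\overline{\varphi(H)}$ is relatively amenable in $G/N$ (every convex compact $G/N$-space is a convex compact $G$-space via the projection $\varphi$), hence amenable since $G/N\in\cx$, and that $H\cap O$ is relatively amenable in $O$ by Lemma~\ref{lem:open}, hence amenable since $O\in\cx$; Lemma~\ref{lem:co-amen} then concludes. You instead verify condition~(ii) by exhibiting a witness space: the product of $K_{G/N}$ (inflated to $G$) with the space co-induced from $K_O$. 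This does work, because $O$ is open: $\mathrm{Map}_O(G,K_O)\cong\prod_{G/O}K_O$ is a non-empty convex compact $G$-space with continuous affine action, and if $g\in O$ fixes $\Phi$ then evaluating $\Phi(g\inv e)=\Phi(e)$ against $O$-equivariance shows $g$ fixes $\Phi(e)\in K_O$; so $\mathrm{Stab}_G(\Phi)\cap O$ is amenable for \emph{every} $\Phi$ --- no conjugation, ``virtually'', or ``suitable choice'' is needed, and your final appeal to Lemma~\ref{lem:co-amen} for $S=\mathrm{Stab}_G(\Phi)\cap\mathrm{Stab}_G(\xi)$ is exactly right. What the paper's route buys is that Lemma~\ref{lem:open} (via Bruhat functions and the mean criterion of Theorem~\ref{thm:equiv}) already performs the analytic work of induction, so no co-induced space need be constructed or its continuity checked; what your route buys is an explicit single convex compact $G$-space certifying $G\in\cx$. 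Two corrections: your parenthetical claim that co-amenability of $O$ in $G$ is automatic because $G/O$ is discrete is false (take $G$ a nonabelian free group and $O$ trivial); fortunately Lemma~\ref{lem:co-amen} never requires it. And your closing fallback via Theorem~\ref{thm:closure}\ref{pt:class:open} and~\ref{pt:class:td:discrete} should be deleted: both of those items are proved in the paper \emph{using} the present lemma, so that reduction is circular, as you suspected.
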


\begin{proof}
Let $\varphi \colon G \to G/N$ denote the canonical projection. Let $H < G$ be a closed, relatively amenable subgroup. Then $\overline{\varphi(H)}$ is relatively amenable in $G/N$, and thus amenable since $G/N \in \cx$. Moreover, the intersection $H \cap O$ is relatively amenable in $O$ by Lemma~\ref{lem:open}, and hence amenable since $O \in \cx$. It follows from Lemma~\ref{lem:co-amen} that $H$ is amenable, as desired. 
\end{proof}

\begin{proof}[Proof of Theorem~\ref{thm:closure}\ref{pt:class:discrete}--\ref{pt:class:open}]
Relative amenability is equivalent to amenability when $G$ is discrete: this is apparent e.g.\ by comparing Condition~\ref{pt:equiv:mean} in Theorems~\ref{thm:equiv} and~\ref{thm:classic} respectively.  This proves~\ref{pt:class:discrete}. We now assume that $G$ is amenable at infinity and proceed to show that the relative amenability of $H<G$ implies that $H$ is amenable. By assumption there exists a compact space $Z$ with a continuous amenable $G$-action. Relative amenability implies that $Z$ carries an $H$-invariant probability (Radon) measure. It follows that $H$ is amenable, since the full stabiliser in $G$ of every probability measure on $Z$ is amenable by Example~2.7(2) in~\cite{Anantharaman02} (the latter fact can alternatively be deduced from Proposition~\ref{prop:moy-convexe}). 

\medskip
The case~\ref{pt:class:sub} of subgroups follows from the definitions. For~\ref{pt:class:direct}, consider a relatively amenable closed subgroup $H$ in a product $G_1\times G_2$. Let $H_i$ be the closure of the projection of $H$ to $G_i$. Then relative amenability still holds for the subgroup $H_i$ of $G_i$; thus both $H_i$ are amenable and hence so is $H_1\times H_2$. Since $H$ is a closed subgroup of the latter, $H$ is amenable.

\smallskip
The case~\ref{pt:class:adele} of adelic products follows from the combination of~\ref{pt:class:direct} with~\ref{pt:class:union}, to which we now turn. Assume that $G$ is the union of a directed family $\{ G_i\}_{i\in I}$ of open subgroups and let $H<G$ be a relatively amenable closed subgroup. Then $H$ is the union of the groups $H\cap G_i$, each of which being relatively amenable in $G_i$ by Lemma~\ref{lem:open}. We are thus assuming that each $H\cap G_i$ is amenable, which implies that $H$ is amenable since the family $\{H\cap G_i\}_{i \in I}$ is directed.

\medskip
From now, we consider a locally compact group $G$ with a closed normal subgroup $N\lhd G$.

\smallskip
For~\ref{pt:class:ameneq} we assume $N$ amenable. Any convex compact $G/N$-space with amenable stabilisers still has amenable stabilisers as a $G$-space. Conversely, assume that there is a non-empty convex compact $G$-space with amenable stabilisers. Then the convex compact $G/N$-space of its $N$-fixed points is non-empty since $N$ is amenable and it follows $G/N\in\cx$.

\smallskip
We prove~\ref{pt:class:open} before~\ref{pt:class:conneq}; assume $N$ open. In particular $G/N$ is discrete, hence contained in $\cx$ by \ref{pt:class:discrete}. If $N \in \cx$, then $G \in \cx$ by Lemma~\ref{lem:UN} (applied  in the special case $N = O$).
The converse implication is a special case of~\ref{pt:class:sub}.

\smallskip
For~\ref{pt:class:conneq} we assume $N$ connected. Suppose first $G/N\in\cx$. Let $R=\Ramen(G)\lhd G$ be the amenable radical of $G$ and recall that $G$ has a finite index open characteristic subgroup $G^*\lhd G$ containing $R$ such that $G^*/R$ is a direct product $G^*/R \cong D\times S$ with $D$ totally disconnected and $S$ a connected semi-simple Lie group (Theorem~11.3.4 in~\cite{Monod_LN}). By point~\ref{pt:class:ameneq}, the quotient $G/\overline{NR}$ is in~$\cx$. The image of $G^*$ in the latter group is a finite index open subgroup of the form $D\times S_1$ for some quotient $S_1$ of $S$; thus $D$ appears as a closed subgroup in $G/\overline{NR}$ and hence $D\in\cx$ by~\ref{pt:class:sub}. Since $S$ is amenable at infinity~\cite[3.2(3)]{Anantharaman02}, we conclude by~\ref{pt:class:direct} that $D\times S$ is in~$\cx$. Now $G^*\in\cx$ by~\ref{pt:class:ameneq} and finally $G\in\cx$ by~\ref{pt:class:open}.

Conversely, suppose $G\in\cx$ and keep the above notation. By~\ref{pt:class:ameneq}, the quotient $G/R$ is in~$\cx$ and hence so is $D$ by~\ref{pt:class:sub}. Since $(G/R)/S$ contains $D$ as an open normal subgroup of finite index, it is in~$\cx$ by~\ref{pt:class:open}. Since $G/\overline{NR}$ is an extension of $(G/R)/S$ by a connected kernel, it is in~$\cx$ by the first implication. Finally, $G/N$ is an extension of $G/\overline{NR}$ by an amenable kernel and hence we conclude by~\ref{pt:class:ameneq}.
\end{proof}

In order to establish the last two points of  Theorem~\ref{thm:closure}, a few additional tools are needed; their proofs rely on the  assertions from Theorem~\ref{thm:closure} which have already been proven.

\begin{lem}\label{lem:ext}
Let $\cx_0$ be a subclass of $\cx$ enjoying the following stability properties, where $G$ denotes a locally compact group and $N < G$ a closed normal subgroup:
\begin{enumerate}[label=(\arabic*)]
\item If $G \in \cx_0$, so does any closed subgroup of $G$. \label{it:closed}

\item If $G \in \cx_0$ and $N$ is amenable, then $G/N \in \cx_0$. \label{it:ramen}

\item If $N \in \cx_0$ and $G/N$ is compact, then $G \in \cx_0$. \label{it:cpt}
\end{enumerate}

Then $\cx$ enjoys the following stability property: if $N \in \cx_0$ and $G/N \in \cx$, then $G \in \cx$. 
\end{lem}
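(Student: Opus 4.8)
The plan is to start from a relatively amenable closed subgroup $H < G$ and show it is amenable, using a Chabauty-type or approximation argument to reduce to compactly generated/$\sigma$-compact situations where the structure theory bites. First I would set $M = H \cap N$; since $N \in \cx_0$ and $\cx_0$ is closed under closed subgroups by \ref{it:closed}, the group $M$ belongs to $\cx_0 \subseteq \cx$. The image $\overline{HN}/N$ of $H$ in $G/N$ is relatively amenable in $G/N$ (relative amenability passes to continuous quotient images of the ambient group, by Condition~\ref{pt:equiv:mean} of Theorem~\ref{thm:equiv} applied to the pushforward of means along $G \to G/N$), and since $G/N \in \cx$ it is amenable. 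So $H$ is an extension of an amenable group $\overline{HN}/N \cong H/M$ by a group $M \in \cx_0$; the goal is to conclude $H$ itself is amenable. Note however that $M$ is only known to be in $\cx_0$, not amenable — so we cannot directly invoke stability of amenability under extensions; we must produce a convex compact $H$-space (or a suitable $M$-space) with amenable point stabilisers and feed $H$'s (relative) amenability into it.

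The key idea is: since $M \in \cx_0 \subseteq \cx$, by Proposition~\ref{prop:clarify}\ref{pt:equiv:mean} the space of means on $\ru(M)$ is a convex compact $M$-space with amenable stabilisers. I would like to induce or extend this to a convex compact $H$-space so that an $H$-fixed point gives an $M$-invariant mean fixed by an amenable complement-type action, forcing $H$ to be amenable. Concretely, the clean route is: relative amenability of $H < G$ restricts (Lemma~\ref{lem:open} is not quite what's needed since $N$ need not be open, but the definition of relative amenability directly gives) an $H$-fixed point in every convex compact $G$-space; in particular, taking the convex compact $G$-space built from the $\ru$-means construction, one gets an $M$-invariant mean on $\ru(M)$ that is moreover suitably $H$-equivariant. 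The structural heart is then to argue that $H$ acting on the (amenable, by $M \in \cx$) stabiliser data, being itself an extension with amenable quotient $H/M$, must be amenable: one invokes that amenability is stable under extensions once we know both the $M$-piece and the $H/M$-piece "act amenably" on the relevant compact space. Here property~\ref{it:cpt} of $\cx_0$ (stability under compact-by-$\cx_0$ extensions) is what lets us pass through the reduction to $\sigma$-compact $G$ (via a compact normal $K \lhd G$ with $G/K$ second countable, as in the proof of Theorem~\ref{thm:classic}) without leaving $\cx_0$.

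More carefully, I would organise it as: (a) reduce to $G$ compactly generated by the directed-union argument of Theorem~\ref{thm:closure}\ref{pt:class:union}-style reasoning applied to $H$ and the open subgroups $G_U$, using that $N \cap G_U \in \cx_0$ by~\ref{it:closed} and $G_U/(N \cap G_U)$ embeds densely in $G/N \in \cx$; (b) reduce to $G$ second countable using~\cite{KakutaniKodaira}: pick compact $K \lhd G$ with $G/K$ second countable, replace $N$ by $\overline{NK}/K$, which still lies in $\cx_0$ by~\ref{it:cpt} applied to $N \cap K$-by-stuff and~\ref{it:ramen} (quotient by the amenable, indeed compact, $K$), and $G/N$ by $(G/K)/(\overline{NK}/K) = G/\overline{NK}$, which is in $\cx$ by Theorem~\ref{thm:closure}\ref{pt:class:ameneq} since $\overline{NK}/N$ is compact hence amenable; (c) in the second countable case, run the cocycle/ergodic argument as in the proof of Theorem~\ref{thm:classic}: relative amenability of $H<G$ yields an $H$-invariant mean on $\ru(H)$ provided the stabilisers are controlled, and the stabiliser of a point here is an extension of the amenable $H/M$ by a stabiliser inside $M$ which is amenable because $M \in \cx$ — so the stabiliser is amenable, giving $H \in \cx$-type conclusion, i.e.\ $H$ amenable. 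I expect the main obstacle to be step~(c): cleanly setting up the $H$-space whose point stabilisers are visibly extensions (amenable)-by-($\cx_0$), and checking that the three closure hypotheses \ref{it:closed}--\ref{it:cpt} on $\cx_0$ are exactly what survives the two reductions; the bookkeeping of which normal subgroup lands in $\cx_0$ versus $\cx$ at each stage is where care is needed, but no genuinely new analytic input beyond Theorems~\ref{thm:equiv} and~\ref{thm:closure} should be required.
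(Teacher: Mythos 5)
There is a genuine gap, and it sits exactly at the point your plan glosses over. Having written $M=H\cap N$ and observed that $\overline{HN}/N$ is amenable, everything hinges on showing that $M$ is amenable; for that you would need to know that $M$ is \emph{relatively amenable in $N$} (so that $N\in\cx_0\se\cx$ can be applied). But relative amenability of $H$ in $G$ does not restrict to relative amenability of $H\cap N$ in $N$ for a general closed normal $N$: the only restriction result available is Lemma~\ref{lem:open}, and its proof uses openness of the subgroup in an essential way (the Bruhat-function map $\cb(O)\to\cb(G)$ sends $\ru(O)$ into $\ru(G)$ \emph{only} when $O$ is open). Your proposed substitute --- ``the convex compact $G$-space built from the $\ru$-means construction'' for $M$ --- is not a $G$-space at all: the space of means on $\ru(M)$ (or on $\ru(N)$) carries only the left-translation action of $M$ (resp.\ $N$), and there is no natural extension to $G$; a mean on $\ru(G)$ cannot be pushed to a mean on $\ru(N)$ without a norm-one equivariant map $\ru(N)\to\ru(G)$, which is precisely what fails when $N$ is not open. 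Step~(c) compounds this by invoking the ergodic/cocycle argument of Theorem~\ref{thm:classic} for a purpose it does not serve: that argument derives amenability of $H$ from a conditional expectation, not from control of stabilisers in some induced space. So the heart of the proof is missing, not merely bookkeeping.

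The paper's actual route is structural and quite different. In the totally disconnected case one takes a compact open subgroup $U$ (van Dantzig), forms the \emph{open} subgroup $O=NU$, which lies in $\cx_0$ by hypothesis~\ref{it:cpt}, and concludes by Lemma~\ref{lem:UN} --- which is exactly the device that converts the non-open normal subgroup $N$ into an open overgroup $O\in\cx$ to which Lemma~\ref{lem:open} and Lemma~\ref{lem:co-amen} apply. The general case is then reduced to the totally disconnected one via the solution to Hilbert's fifth problem, passage to the quotient by the amenable radical of the preimage $M$ of the maximal compact normal subgroup of $(G/N)^\circ$, and the decomposition $M^*\cong S\times D$; hypotheses~\ref{it:closed} and~\ref{it:ramen} are used to keep the relevant subquotients inside $\cx_0$ along the way. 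If you want to salvage your approach, the missing ingredient you must supply is precisely a mechanism for producing an open subgroup of $G$ containing (enough of) $N$ that is known to lie in $\cx$ --- which is what hypothesis~\ref{it:cpt} together with van Dantzig provides and what no amount of mean-pushing around the closed subgroup $N$ will.
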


\begin{proof}
Let $G$ be a locally compact group and $N < G$ be a closed normal subgroup such that $N \in \cx_0$ and $G/N \in \cx$. We need to show that $G \in \cx$. 

Let us assume in a first case that $G$ is totally disconnected. Then it has a compact open subgroup $U$ by van Dantzig's theorem. The product $O = NU$ is open in $G$ and belongs to $\cx_0$ by~\ref{it:cpt}, hence to $\cx$.  By Lemma~\ref{lem:UN}, we infer that $G \in \cx$, as desired.

We now turn to the general case. 
By the solution to Hilbert's fifth problem, the connected locally compact group $(G/N)^\circ$ has a unique maximal compact normal (hence characteristic) subgroup $W$ such that  $(G/N)^\circ/W$ is a Lie group. Let $M$ be the pre-image of $W$ in $G$. Then $M/N$ is compact, so that $M \in \cx_0$ by~\ref{it:cpt}. By Theorem~\ref{thm:closure}\ref{pt:class:ameneq} we have $G/M \in \cx$ and by construction the identity component of $G/M$ is a Lie group. 

Let now $R$ denote the amenable radical of $M$. We have $M/R \in \cx_0$ by~\ref{it:ramen} and, in view of Theorem~\ref{thm:closure}\ref{pt:class:ameneq}, the desired conclusion that $G\in \cx$ will follow if one shows that $G/R \in \cx$. Hence we may assume without loss of generality that $R=1$. 

By Theorem~11.3.4 in~\cite{Monod_LN}, the group $M$ has an open characteristic subgroup of finite index $M^*$ isomorphic to a direct product $S  \times D$, where $S = M^\circ$ is a connected semi-simple Lie group with trivial centre and no compact factor, and $D = \centra_M(M^\circ)$ is a totally disconnected group with trivial amenable radical. In particular $D$ is characteristic in $M$, hence normal in $G$. Notice moreover that $D \in \cx_0$ by~\ref{it:closed}.

We next claim that $G^\circ \cap D = 1$. 
Indeed, the intersection    $G^\circ \cap D$ is a totally disconnected closed normal subgroup of $G^\circ$. Invoking again the solution to Hilbert's fifth problem, we find a compact normal subgroup $V$ of $G^\circ$ such that $G^\circ/V$ is a Lie group. Since the canonical projection $G^\circ \to G^\circ /V$ is proper, the image of $G^\circ \cap D$ in the connected Lie group $G^\circ/V$ is a closed totally disconnected normal subgroup. It must thus be discrete, hence central. It follows that $G^\circ \cap D$ is compact-by-\{discrete abelian\}. Therefore  $G^\circ \cap D$ is amenable, hence contained in the amenable radical of $D$, which is trivial. The claim stands proven. 

Since $(G/M)^\circ$ is a Lie group, the image of $G^\circ$ under the canonical projection $G \to G/M$ coincides with $(G/M)^\circ$ (see Lemma~2.4 in~\cite{CCMT_JEMS_pre}). In particular $G^\circ M$ is closed in $G$. It follows that $G^\circ D$ is also closed. Therefore the image of $D$ in the quotient  $G_1 = G/G^\circ$ is a closed normal subgroup $D_1$ isomorphic to $D$. In particular $D_1 \in \cx_0$. Moreover, we have $G_1/D_1 \cong G/ G^\circ D \cong (G/M^*)/ (G/M^*)^\circ \in \cx$ since $G/M \in \cx$, by using Theorem~\ref{thm:closure}\ref{pt:class:ameneq} and~\ref{pt:class:conneq}. Since the totally disconnected case has already been treated, we infer that $G_1 \in \cx$, and finally that $G \in \cx$ by  Theorem~\ref{thm:closure}\ref{pt:class:conneq}.
\end{proof}

\begin{lem}\label{lem:amen-by-discrete}
Let $\cx_0$ be the class of locally compact groups that are directed unions of amenable-by-discrete open subgroups. Then $\cx_0 $ is contained in $\cx$ and satisfies conditions~\ref{it:closed}, \ref{it:ramen} and~\ref{it:cpt} from Lemma~\ref{lem:ext}.
\end{lem}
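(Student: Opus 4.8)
The plan is to establish, in order, the containment $\cx_0\se\cx$ and the three stability properties, the first two properties being bookkeeping and the third (condition~\ref{it:cpt}) being the substantial one. For $\cx_0\se\cx$: if $G=\bigcup_i G_i$ is a directed union of open amenable-by-discrete subgroups, then each $G_i$ is an amenable extension of a discrete group and hence lies in $\cx$ by Theorem~\ref{thm:closure}\ref{pt:class:discrete} and~\ref{pt:class:ameneq}, so $G\in\cx$ by Theorem~\ref{thm:closure}\ref{pt:class:union}. For condition~\ref{it:closed}: given a closed $H<G$, the subgroups $H\cap G_i$ form a directed family of open subgroups of $H$ with union $H$, and each is amenable-by-discrete because intersecting $H$ with an amenable open normal subgroup of $G_i$ produces an amenable open normal subgroup of $H\cap G_i$ with discrete quotient. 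For condition~\ref{it:ramen}: given an amenable $N\lhd G$, the subgroups $G_iN/N$ do the same job, the image of the amenable part of $G_i$ being amenable, open and normal with a (necessarily discrete) quotient.

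For condition~\ref{it:cpt} I will use two auxiliary facts. \textbf{(a)} A locally compact group is amenable-by-discrete if and only if its quotient by the amenable radical is discrete (in which case the radical is automatically open); consequently, pulling back the amenable radical shows that if $A\lhd G$ is amenable and $G/A$ is amenable-by-discrete --- or merely lies in $\cx_0$ --- then so is $G$. \textbf{(b)} A compactly generated member of $\cx_0$ is itself amenable-by-discrete, a compact generating set being absorbed by a single open piece of a covering; in particular a connected member of $\cx_0$ has no proper open subgroup, hence coincides with one of the amenable-by-discrete subgroups in a covering, and being connected is even amenable.

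Now let $N\in\cx_0$ with $G/N$ compact. Using~(a) with $A=\Ramen(G)$, together with condition~\ref{it:ramen} (the image of $N$ in $G/\Ramen(G)$ still lies in $\cx_0$, with compact quotient), I reduce to the case $\Ramen(G)=1$; then $\Ramen(N)\se\Ramen(G)=1$ as well, since $\Ramen(N)$ is normal in $G$. By~(b) the connected group $N^\circ\in\cx_0$ is amenable, but $\Ramen(N^\circ)\se\Ramen(N)=1$, so $N^\circ=1$ and $N$ is totally disconnected. The key point is that $G$ is then totally disconnected as well: were $G^\circ$ nontrivial, Hilbert's fifth problem together with $\Ramen(G)=1$ would make $G^\circ$ a connected semisimple Lie group with trivial centre and no compact factor; its intersection with $N$ is a closed normal subgroup of $G^\circ$, hence a product of simple factors and therefore connected, yet also totally disconnected, hence trivial; so $G^\circ$ embeds in the compact group $G/N$, its closure there is a compact connected group in which it is dense, and projecting onto any compact Lie quotient $L$ of that closure, the image of $G^\circ$ is a closed (semisimple) subgroup, hence all of $L$, making $L$ a compact quotient of $G^\circ$ and therefore trivial --- forcing the closure, and thus $G^\circ$, to be trivial. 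With $G$ totally disconnected, it is the directed union of its compactly generated open subgroups, so it suffices to prove that each such $P$ is amenable-by-discrete. For fixed $P$, the group $PN/N$ is open, hence compact, in $G/N$, so $P\cap N$ is cocompact --- therefore compactly generated --- in $P$; as $P\cap N$ lies in $\cx_0$ by condition~\ref{it:closed}, fact~(b) makes it amenable-by-discrete, and applying~(a) with $A=\Ramen(P\cap N)$ I may assume $P\cap N$ is discrete.

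It then remains to treat a compactly generated totally disconnected group $\Gamma$ with a discrete normal subgroup $\Delta$ such that $\Gamma/\Delta$ is compact, and to show $\Gamma$ is amenable-by-discrete. Since $\Delta$ is a closed cocompact subgroup of a compactly generated group it is finitely generated, so its centraliser $C=C_\Gamma(\Delta)$, being the intersection of finitely many open subgroups, is open and normal in $\Gamma$. Moreover $C\cap\Delta=Z(\Delta)$ is abelian, while $C/Z(\Delta)\cong C\Delta/\Delta$ is an open --- hence compact --- totally disconnected, i.e.\ profinite, subgroup of $\Gamma/\Delta$; therefore $C$ is amenable, and being open and normal with discrete quotient it exhibits $\Gamma$ as amenable-by-discrete. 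The main obstacle I anticipate is exactly the step forcing $G$ to be totally disconnected once $\Ramen(G)=1$: this is where genuine structure theory is unavoidable (Hilbert's fifth problem, closedness of semisimple Lie subgroups, and rigidity of semisimple groups with no compact factor against compact targets), everything else being manipulation of the amenable radical together with van Dantzig's theorem and standard facts on compactly generated groups.
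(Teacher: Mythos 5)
Your treatment of the containment $\cx_0\se\cx$ and of conditions~\ref{it:closed} and~\ref{it:ramen} matches the paper's (which disposes of the latter two in one sentence), and your final centraliser argument for condition~\ref{it:cpt} is essentially the paper's closing step --- indeed your variant, which only needs $C\cap\Delta=Z(\Delta)$ abelian rather than a trivial amenable radical, is marginally slicker. The problem is the road you take to get there. The reduction to $\Ramen(G)=1$ is not justified as written: the subgroup $N\Ramen(G)$ need not be closed in $G$ (the product of two closed normal subgroups can fail to be closed even when one of them is cocompact --- normal subgroups of the compact quotient $G/N$ need not be closed), so ``the image of $N$ in $G/\Ramen(G)$'' is not a closed cocompact normal subgroup to which condition~\ref{it:cpt} could be applied, and the continuous bijection $N/(N\cap\Ramen(G))\to N\Ramen(G)/\Ramen(G)$ need not be a homeomorphism. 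Passing to the closure $\overline{N\Ramen(G)}$ does not repair this, because showing that $\overline{N\Ramen(G)}\in\cx_0$ is itself an instance of condition~\ref{it:cpt}, i.e.\ circular. (The analogous reduction in the proof of Lemma~\ref{lem:ext} is safe precisely because there one quotients by the amenable radical of the \emph{normal subgroup} $M$, which is characteristic in $M$ and hence a closed normal subgroup of $G$ contained in $M$.)

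The good news is that everything this faulty reduction feeds --- in particular the entire total-disconnectedness argument via Hilbert's fifth problem and closedness of semisimple analytic subgroups, which you single out as the unavoidable core --- is superfluous. Every locally compact group is the directed union of its open compactly generated subgroups $P$ (take the subgroups generated by compact identity neighbourhoods), and your own endgame runs for each such $P$ with no hypothesis on $\Ramen(G)$ or on connectedness: $P\cap N$ is cocompact in $P$, hence compactly generated by~\cite{Macbeath-Swierczkowski59}, hence (being in $\cx_0$ by~\ref{it:closed}) amenable-by-discrete by your fact~(b); quotienting by $\Ramen(P\cap N)$ --- which \emph{is} a legitimate use of your fact~(a), since $\Ramen(P\cap N)$ is contained in $P\cap N$ and normal in $P$ --- reduces to a discrete cocompact normal subgroup, and the centraliser argument finishes. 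This is exactly the paper's proof, which goes straight to the directed family of open compactly generated $G_1$ with $G_1N=G$. So: delete the reduction to $\Ramen(G)=1$ and the total-disconnectedness detour, and what remains is a correct proof coinciding with the paper's.
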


\begin{proof}
Every amenable-by-discrete group belongs to $\cx$ by Theorem~\ref{thm:closure}\ref{pt:class:discrete} and~\ref{pt:class:ameneq}. Therefore $\cx_0 \subset \cx$ by Theorem~\ref{thm:closure}\ref{pt:class:union}. 

It follows from the definition that the class $\cx_0$ is closed under passing to closed subgroups, and to quotients by closed normal subgroups, so that conditions~\ref{it:closed} and~\ref{it:ramen}   from Lemma~\ref{lem:ext} are satisfied.

Let now $G$ be a locally compact group with a closed cocompact normal subgroup $N \in \cx_0$. We consider the following set of closed subgroups of $G$:
$$
\sF = \{G_1 < G \; | \; G_1 \text{ is open, compactly generated, and } G = G_1 N\}.
$$
Then $\sF$ is a directed set. Moreover, since $G/N$ is compact, the set $\sF$ is non-empty and we have  $G = \bigcup \sF$. Therefore, it suffices to show  that each $G_1 \in \sF$ belongs to $\cx_0$. 

Given $G_1 \in \sF$, we have $N_1 = G_1 \cap N \in \cx_0$. Moreover $G_1 / N_1 = G_1 N / N = G/N$ is compact. Since $G_1$ is compactly generated, so is thus $  N_1$ by~\cite{Macbeath-Swierczkowski59}. Any compactly generated group in $\cx_0$ is amenable-by-discrete. So is thus  $ N_1$. In other words, the amenable radical $R$ of $N_1$ is open in $N_1$. All we need to show is that the amenable radical of $G_2 = G_1/R$ is open. The image $N_2 = N_1/R$ of $N_1$ in $G_2$ is a finitely generated discrete cocompact normal subgroup with trivial amenable radical. Its centraliser $Z = \centra_{G_2}(N_2)$ is thus open, and the intersection $Z \cap N_2 = \centra(N_2) $ is trivial. Therefore $Z N_2 \cong Z \times N_2$ is an open normal subgroup of $G_2$. Since $G_2/N_2$ is compact, it follows that $Z$ is compact, hence amenable. The amenable radical of $G_2$ therefore contains $Z$, and is thus open as desired. 
\end{proof}

\begin{proof}[End of proof of Theorem~\ref{thm:closure}]
Assertion~\ref{pt:class:td:discrete} is now immediate from Lemmas~\ref{lem:ext} and~\ref{lem:amen-by-discrete}. 
For~\ref{pt:class:td:mai}, we note that the class $\cx_0$ of locally compact groups that are amenable at infinity is contained in $\cx$ by Theorem~\ref{thm:closure}\ref{pt:class:mai}.  Moreover $\cx_0$ satisfies the three conditions of Lemma~\ref{lem:ext} below:  \ref{it:closed}~is to be found e.g.\ in~\cite[5.2.5(i)]{Anantharaman-Renault},  \ref{it:ramen}~follows from Corollary~\ref{cor:moduloamen}, and~\ref{it:cpt} is ensured by~\cite[5.2.5(ii)]{Anantharaman-Renault}. The desired conclusion follows. 
\end{proof}

\begin{rem}
We have proved an assertion stronger than Theorem~\ref{thm:closure}\ref{pt:class:td:discrete}, namely:

\itshape
Suppose that $N$ is a directed union of subgroups that are open (in $N$) and amenable-by-discrete. If $G/N\in\cx$, then $G\in\cx$.
\upshape
\end{rem}

\section{Proof of corollaries}
\label{sec:cors}

A first application of Theorem~\ref{thm:equiv} is:

\begin{proof}[Proof of Proposition~\ref{prop:derigh}]
Let $H < G$ be relatively amenable and suppose that it satisfies Derighetti's condition; explicitly, this means that there are $H$-almost-invariant vectors in the quasi-regular $G$-representation on $L^2(G/H)$. This implies that there is an $H$-invariant mean on $L^\infty(G/H)$: the argument given e.g.\ in~\cite[p.~29]{Eymard72} for the case of $G$-invariant means applies without any change: the mean is a weak-* accumulation point of the densities constructed by squaring elements of $L^2(G/H)$. Composing such a mean with the map~$\alpha$ provided by Theorem~\ref{thm:equiv}\ref{pt:equiv:opno} provides a mean as required by Theorem~\ref{thm:classic}\ref{pt:classic:mean}.
\end{proof}

\begin{rem}
It follows from Reiter's characterisation of amenability that, if the closed subgroup $H < G$ is amenable, then $H$ satisfies Derighetti's condition. Therefore, the converse of Proposition~\ref{prop:derigh} holds and we obtain the following equivalence: \emph{Among the relatively amenable subgroups of $G$, amenability is equivalent to Derighetti's condition.}
\end{rem}

We now turn to Chabauty limits of amenable groups. Using Fell's multivariate continuity~\cite{Fell64}, Schochetman proved that a limit of a net amenable subgroups remains amenable when the net consists of \emph{subgroups of the limit}~\cite[Thm.~3]{Schochetman71}; this is however completely trivial with the fixed point definition of amenability. It follows readily that amenability passes to the limit when the limit is an \emph{open} subgroup~\cite[Cor.~1]{Schochetman71}.

As far as we know, the general case remains unknown. We can however answer the question whenever the ambient group $G$ belongs to the very large class~$\cx$, or when the limit group falls within the scope of Proposition~\ref{prop:derigh}. Indeed, the \emph{proof of Corollary~\ref{cor:limit}} follows immediately from:

\begin{lem}\label{lem:chabauty}
In any locally compact group, the set of relatively amenable closed subgroups is Chabauty-closed. 
\end{lem}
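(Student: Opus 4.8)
The plan is to use the fixed point characterisation of relative amenability directly, exploiting the fact that a net of $H$-fixed points in a fixed convex compact $G$-space has a convergent subnet whose limit is fixed by the Chabauty limit. Concretely, let $\{H_i\}_{i \in I}$ be a net of relatively amenable closed subgroups of $G$ converging to a closed subgroup $H$ in the Chabauty topology. I want to show $H$ is relatively amenable, i.e.\ that $H$ fixes a point in every non-empty convex compact $G$-space $K$.

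First I would fix such a $K$. For each $i$, relative amenability of $H_i$ gives a nonempty set $K^{H_i} \subseteq K$ of $H_i$-fixed points; pick $k_i \in K^{H_i}$. Since $K$ is compact, pass to a subnet (not relabelled) so that $k_i \to k \in K$. The claim is that $k$ is $H$-fixed. Take any $h \in H$. By the definition of the Chabauty topology, convergence $H_i \to H$ means in particular that $h$ is a limit of elements $h_i \in H_i$; more precisely, there is a net $h_i \to h$ with $h_i \in H_i$, possibly after passing to a further subnet indexed compatibly. Then $h_i k_i = k_i$ for all $i$ by choice of $k_i$. Now use joint continuity of the affine $G$-action $G \times K \to K$ (which holds because the action is continuous and $K$ is compact): $h_i k_i \to h k$, while $h_i k_i = k_i \to k$. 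Hence $h k = k$, so $k \in K^H$, and $H$ is relatively amenable.

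The one technical point to handle carefully is the simultaneous passage to subnets: one needs, for the given $h \in H$ (or better, for a suitable approximating family), a net $h_i \in H_i$ converging to $h$ along the same index set over which $k_i \to k$. The clean way is to work with a universal subnet (or use ultrafilters): choose an ultrafilter $\mathcal U$ on $I$ refining the order filter, set $k = \lim_{\mathcal U} k_i$, and observe that for each $h \in H$, by the characterisation of Chabauty convergence there exist $h_i \in H_i$ with $h_i \to h$ along $\mathcal U$ (one picks $h_i$ within $1/n$-distance, or within a neighbourhood basis, of $h$ for $i$ in the appropriate $\mathcal U$-large set, using that $H$ lies in the Chabauty lower limit of the $H_i$). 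Then $h k = \lim_{\mathcal U} h_i k_i = \lim_{\mathcal U} k_i = k$. This avoids any issue with mismatched index sets.

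The main obstacle is purely the bookkeeping of nets/subnets and recalling the precise definition of the Chabauty topology that guarantees the lower-limit property $H \subseteq \liminf H_i$; once that is in hand the argument is a one-line compactness-plus-continuity statement. No functional-analytic input is needed here: this is exactly the place where the fixed point formulation of relative amenability (Theorem~\ref{thm:equiv}\ref{pt:equiv:fixed}) makes the Chabauty-closedness transparent, in contrast to the analogous open question for genuine amenability.
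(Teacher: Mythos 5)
Your proposal is correct and is essentially the paper's own proof: the paper likewise fixes a convex compact $G$-space $K$, picks $H_i$-fixed points $x_i$, and observes that any accumulation point is $H$-fixed, with your ultrafilter bookkeeping merely spelling out the lower-limit property of Chabauty convergence that the paper leaves implicit.
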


\begin{proof}
Let $G$ be a locally compact group, $H<G$ a closed subgroup and $K$ a non-empty convex compact $G$-space. Suppose that $H$ is the limit of a net $\{H_i\}_{i\in I}$ of closed subgroups that are relatively amenable. If $x_i\in K$ is fixed by $H_i$, then any accumulation point of the net $\{x_i\}_{i\in I}$ will be fixed by $H$.
\end{proof}

Before turning to Theorem~\ref{thm:Reiter}, we clarify the use of terminology. At the time of Reiter, what was called right approximate units (and confusingly sometimes right approximate identities) was the existence \emph{for each $x$} of a net $\{u_i\}$ such that $x*u_i\to x$. What is now called a right approximate identity (i.e. a net independent of $x$) was called \emph{multiple} approximate units. However, it was realized in 1971 that both concepts coincide in any Banach algebra, even preserving the control of the norm of the net~\cite{Wichmann73, Altman72, Altman73}.

In the present article, we shall never consider the older approximate units, noting that the equivalence is not clear when we look at subsets of algebras or at modules. Nonetheless, the above equivalence should be kept in mind when we quote Reiter's work which predates it.

\begin{proof}[Proof of Theorem~\ref{thm:Reiter}]
We begin by recalling a general fact for any Banach algebra $A$ admitting a bounded right approximate identity and any closed left ideal $J$ in $A$: the ideal $J$ has a bounded right approximate identity (in itself) if and only if its annihilator
$$J^\perp := \big\{f\in A^* : \langle f, u\rangle =0 \ \forall u\in J \big\}$$
is right-invariantly complemented in the dual $A^*$. The latter means that there is $\alpha\in\sL(A^*)$ with $\alpha(A^*)=J^\perp$, which is the identity on $J^\perp$, and which is equivariant with respect to the dual right $A$-module structure (which preserves $J^\perp$ since $J$ is a left ideal). This fact was established in~\cite[4.1.4 p.~42]{ForrestPHD}, see also~\cite[6.4]{Forrest90}. A more precise statement under the stronger asssumption that $A$ has a bounded approximate identity can be found in~\cite[\S2]{Delaporte-Derighetti}. We apply the general statement to $A=L^1(G)$; in this special case, it is also established in~\cite[Thm.~1]{Bekka90b}.

We observe that the usual left $A$-convolution on $A^*\cong L^\infty(G)$ corresponds to the dual right $A$-action via the canonical involution of $A$ recalled in Section~\ref{sec:notation}. It follows that right $A$-equivariance is equivalent to the usual left $L^1(G)$-equivariance for $\sL(L^\infty(G))$. We now consider $J=J^1(G,H)$. By the duality principle, the annihilator $J^\perp$ is identified with $L^\infty(G/H)$. Therefore, an application of the $L^1(G)$-equivariant version of criterion~\ref{pt:classic:op} in Theorem~\ref{thm:classic} finishes the proof.
\end{proof}

\section{Further observations}\label{sec:further}
\subsection{Measurable actions}\label{sec:measure}
At the end of the introduction, we defined relative amenability for actions. The following proposition shows that it generalises indeed the relative amenability of subgroups.

\begin{prop}\label{prop:furst}
Let $H$ be a closed subgroup of a second countable locally compact group $G$ and endow $G/H$ with the unique (non-zero) $G$-invariant measure class. Then the action of $G$ on $G/H$ is relatively amenable if and only if $H$ is amenable relative to $G$ .
\end{prop}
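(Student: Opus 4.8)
The plan is to unwind both sides of the equivalence using the machinery of Section~\ref{sec:char}, so that the statement becomes the tautology that a certain equivariant map exists. First I would fix a non-empty convex compact $G$-space $K$ and recall the characterisation of relative amenability of $H$ from Theorem~\ref{thm:equiv}\ref{pt:equiv:opno}: there is a $G$-equivariant positive unital norm-one map $\alpha\colon L^\infty(G)\to L^\infty(G/H)$. I would also recall the characterisation~\ref{pt:equiv:fixed} directly in terms of $H$-fixed points. On the other side, relative amenability of the action $G\acts G/H$ asks, for every such $K$, for a $G$-equivariant measurable map $\Phi\colon G/H\to K$. The core of the argument is the standard dictionary, used already in the proof of Theorem~\ref{thm:classic}, between such measurable maps on $G/H$ and certain $H$-equivariant (or cocycle-equivariant) objects pulled back to $G$ via a Borel section $\sigma\colon G/H\to G$ of the projection $G\to G/H$.

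For the implication ``$H$ relatively amenable $\Rightarrow$ $G\acts G/H$ relatively amenable'': given $K$, relative amenability of $H$ produces an $H$-fixed point $k_0\in K$. Then the constant-on-cosets recipe $\Phi(gH):= g\,k_0$ is well-defined precisely because $k_0$ is $H$-fixed, and it is manifestly $G$-equivariant (indeed everywhere, not just a.e.) and continuous, hence measurable. This direction is essentially immediate.

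For the converse ``$G\acts G/H$ relatively amenable $\Rightarrow$ $H$ relatively amenable'': given a non-empty convex compact $G$-space $K$, apply relative amenability of the action to get a measurable $G$-equivariant $\Phi\colon G/H\to K$. Pull back along the section: define $\psi\colon G\to K$ by $\psi(g)=\sigma(gH)\inv g\,\Phi(gH)$ — wait, more cleanly, evaluate the $G$-equivariance of $\Phi$ at elements of $H$. The equation $\Phi(g x)=g\Phi(x)$ a.e.\ in $x$, specialised so that $x=H$ is the trivial coset, gives $\Phi(hH)=h\,\Phi(H)$ for a.e.\ $h\in H$; since $H$ is a coset-representative subgroup this forces $\Phi(H)$ to be $H$-invariant up to the measure-zero subtlety. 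The honest way to extract a genuine $H$-fixed point is the ergodicity/essential-uniqueness argument exactly as in the proof of Theorem~\ref{thm:classic}: the map $g\mapsto \sigma(gH)\inv g \in H$ is Borel, and one checks that the pulled-back map $G\to K$ is invariant under right translation by a countable dense subgroup, hence a.e.\ constant by ergodicity of that action on $G$ (using that $K$ embeds in the dual of a separable space, after the same reduction to separable invariant subspaces of $\ru(H)$ as in Section~\ref{sec:classic}), and the constant value is then $H$-fixed by construction. This yields an $H$-fixed point in $K$, i.e.\ relative amenability of $H$ via Theorem~\ref{thm:equiv}\ref{pt:equiv:fixed}.

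\textbf{Main obstacle.} The only delicate point is the passage from ``a.e.\ invariance'' to ``a genuine fixed point'': the naive $\Phi(H)$ is only defined up to null sets, so one cannot simply declare it $H$-fixed. Handling this requires the second-countability hypothesis (which is in the statement), a Borel section, and the ergodicity-of-the-dense-subgroup trick together with the reduction to separable $H$-invariant subspaces of $\ru(H)$ — all of which are already carried out in the proof of Theorem~\ref{thm:classic}, so I would invoke that argument rather than repeat it. Everything else (well-definedness, equivariance, measurability) is routine.
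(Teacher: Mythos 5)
Your proposal is correct and follows essentially the same route as the paper: the easy direction via $\Phi(gH)=gk_0$ for an $H$-fixed $k_0$, and the converse by pulling $\Phi$ back to a map $G\to K$ that is a.e.\ right-$G$-invariant and left-$H$-equivariant, then applying the ergodicity-of-a-countable-dense-subgroup argument from the proof of Theorem~\ref{thm:classic} (after reducing to $K$ separable) to extract a genuine $H$-fixed point. The paper's proof is exactly this, with the pull-back written as $\fhi(g)=g\Phi(g\inv H)$.
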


\begin{proof}
We suppose that the action is relatively amenable (the reverse implication being trivial). Let $K$ be a non-empty convex compact $G$-space; we need to find an $H$-fixed point. Since $G$ is second countable, we can assume that $K$ is separable. Let $\Phi\colon G/H\to K$ be a measurable equivariant map. We argue similarly to the last part of the proof of Theorem~\ref{thm:classic}: define $\fhi\colon G\to K$ by $\fhi(g) = g \Phi(g\inv H)$. Then, for all $\bar g\in G$ and all $h\in H$, we have $\fhi(g \bar g) = \fhi(g)$ and $\fhi(hg) = h\fhi(g)$ for a.e.~$g\in G$. The conclusion follows as in Theorem~\ref{thm:classic}.
\end{proof}

As a corollary, we see that any example of a non-amenable relatively amenable subgroup would also show that relative amenability of actions is strictly weaker than Zimmer-amenability.

\medskip

According to Theorem~A in~\cite{Adams-Elliott-Giordano}, the stabiliser of almost every point in a Zimmer-amenable action of a second countable locally compact group $G$ is an amenable (closed) subgroup of $G$. It seems that the proof contains a gap. Specifically, Lemma~4.3 in this reference is equivalent to stating that a subgroup of $G$ is amenable if and only if it has an invariant mean on $\ru(G)$, which is equivalent to relative amenability. The point in that proof that seems not to be justified is the reference (on page~816) to Proposition~7.2.7 in~\cite{Zimmer84}; indeed, that proposition uses $G$-invariance. However, this issue disappears if $G$ belongs to the class~$\cx$.

In any case, a general result for groupoids (Corollary~5.3.33 in~\cite{Anantharaman-Renault}) implies the statement of Theorem~A in~\cite{Adams-Elliott-Giordano}.

\subsection{On the Reiter condition} 

One of the well-known equivalent characterizations of the amenability of a locally compact group $G$ is the Reiter condition, namely the existence of \emph{asymptotically invariant} elements in $L^1(G)$. More precisely, a net $\fhi_i$ of positive normalized elements in $L^1(G)$ such that $g\fhi_i-\fhi_i$ goes to zero in norm for all $g\in G$.  By the Mazur trick, it suffices to have \emph{weak} convergence to zero.  Moreover, as recalled above, the amenability of $H<G$ is equivalent to requiring either form of convergence for all $g\in H$.

\medskip
How, then, can we reformulate relative amenability for $H<G$ in terms of a Reiter condition? It is not hard to check that this amounts to the convergence of $g\fhi_i-\fhi_i$ to zero (for all $g\in H$) with respect to the right \emph{strict topology} of $L^1(G)$. The latter is a locally convex topology going back to~\cite{Buck52} in the commutative case and extended to general Banach algebras in~\cite{Sentilles-Taylor}. The verification is a direct application of Cohen's factorisation.

\subsection{Pairs of subgroups}\label{sec:relative}
The fixed point property for subroups that we called relative amenability can be seen as a particular case of the following.

\begin{defn}\label{defn:pairs}
Let $G$ be a topological group and let $L, H<G$ be subgroups. We say that $L$ is co-amenable to $H$ relative to $G$ if any convex compact $G$-space with an $L$-fixed point has an $H$-fixed point. We write $L\succeq_G H$.
\end{defn}

Thus, $\succeq_G$ defines a pre-order on the family of subgroups of $G$ which descends to conjugacy classes of subgroups. At one end, $1\succeq_G H$ amounts to the relative amenability of $H<G$. At the other extreme, $L\succeq_G G$ amounts to the \emph{co-amenability} of $L<G$ as studied by Eymard~\cite{Eymard72} (see also~\cite{Monod-Popa} for more equivalent conditions). It is straightforward that $\succeq_G$ is closed in the second variable for the Chabauty topology.

\medskip
It was pointed out to us by Marc Burger that Definition~\ref{defn:pairs} was independently introduced and studied by J\"urg Portmann in his thesis~\cite{Portmann}. Moreover, the following Rickert-like characterisation is proved in~\cite[2.3.5]{Portmann}: $L\succeq_G H$ if and only if there is an $H$-invariant mean on $\ru(G/L)$.

\medskip
We further record that the argment given above in Section~\ref{sec:char} shows that $L\succeq_G H$ is equivalent (for locally compact groups) to the following variant of~\ref{pt:equiv:op} in Theorem~\ref{thm:equiv} above: There is a $G$-equivariant continuous linear map $\alpha\colon L^\infty(G/L)\to L^\infty(G/H)$ with $\alpha(\one_{G/L})=\one_{G/H}$. Arguing as before, one can moreover obtain $\alpha$ to be positive and normalized.

\subsection{Are there counter-examples?}\label{sec:exo}
\begin{flushright}
\begin{minipage}[t]{0.65\linewidth}\itshape\small
There is not the smallest probability that, after having been as obstinate as a mule for two years, she suddenly became amenable\ldots
\begin{flushright}
\upshape\tiny
H. James, \emph{Washington Square},\\
Macmillan, London (1921),  p.~212.
\end{flushright}
\end{minipage}
\end{flushright}
\vspace{5mm}
We have already pointed out that Theorem~\ref{thm:closure} should make it very difficult to find a non-amenable group $H$ appearing as a relatively amenable closed subgroup of a locally compact group $G$. Turning our horses around, we shall now discuss some of the limitations of Theorem~\ref{thm:closure}.

\medskip
In contrast to assertions~\ref{pt:class:ameneq} and~\ref{pt:class:conneq}, neither~\ref{pt:class:td:discrete} nor~\ref{pt:class:td:mai} are likely to admit a converse. Indeed, it would then follow in both cases that \emph{all} locally compact groups belong to~$\cx$.

The reason is as follows. As we have established, it suffices to consider a compactly generated totally disconnected locally compact group $G$. After possibly factoring out a compact kernel, any such group can be written as the quotient of a closed subgroup $\widetilde G$ of the automorphism group of a locally finite regular tree by a normal subgroup $N$ which is free and discrete. (This is explained in~\cite[3.4]{Burger-Monod} or in~\cite[p.~150]{Monod_LN}.) Now $\widetilde G$ is amenable at infinity (this is well-known and follows, for instance, from the general results in~\cite{Lecureux}). As to $N$, it is both discrete and amenable at infinity. Picking up the pieces, a converse to either~\ref{pt:class:td:discrete} or~\ref{pt:class:td:mai} would imply that $G$ is in~$\cx$.

\medskip
We do not know if the class~$\cx$ is closed under taking arbitrary extensions. We claim that this question can be reduced to the following: \itshape
Let $G$ be a semi-direct product $G=U\ltimes N$ with $U$ profinite and $N\in\cx$ totally disconnected; is $G$ in~$\cx$?\upshape

Indeed, if we attempt to apply Lemma~\ref{lem:ext} to $\cx_0=\cx$, its first two stability assumptions are granted by Theorem~\ref{thm:closure}. This leaves us with the third, and thus the general extension problem is reduced to the case where $G/N$ is compact. The same line of reasoning as in the proof of Lemma~\ref{lem:ext} further reduces us to the case where $G$ is totally disconnected. Now $G$ admits an open profinite subgroup $U$ by van Dantzig's theorem, and we can assume $G=U.N$ by an application of Lemma~\ref{lem:UN}. Now $G$ is canonically presented as a quotient of a semi-direct product $G=U\ltimes N$ by the compact kernel $N\cap U$, so that our claim follows using Theorem~\ref{thm:closure}\ref{pt:class:ameneq}.

\medskip
Regarding extensions, we also do not know whether the class of locally compact groups amenable at infinity is closed under extensions (even in the special case where the quotient is amenable); the discrete case is established in~\cite[5.2.6]{Anantharaman-Renault}. Such extensions do, however, belong to~$\cx$ as a consequence of Theorem~\ref{thm:closure}, points~\ref{pt:class:mai} and~\ref{pt:class:td:mai}.

\bigskip
There is an equivalent reformulation of the criterion of Theorem~\ref{thm:equiv}\ref{pt:equiv:op} for relative amenability which highlights the measurability pitfalls that might blur the distinction between relative amenability and usual amenability of $H<G$. The map~$\alpha$ corresponds to a $G$-equivariant element
$$A\in\lw\Big( G/H, (L^\infty(G))^*\Big).$$
At first sight, this seems to mean a weak-* measurable assignment of a mean $A(x)$ on $L^\infty(G)$ for each $x\in G/H$. Now $G$-equivariance would imply that $A(x)$ is fixed by the $x$-conjugate of $H$ in $G$, which implies that this conjugate is amenable (Theorem~\ref{thm:classic}\ref{pt:classic:mean}) and hence $H$ is amenable.

Now of course $A$ is only a function \emph{class}. The fact that $L^\infty(G)$ is non-separable is not of much concern: if we are willing to assume $G$ second countable, a compactness argument reduces us to work in the situation where the mean is defined on a separable $G$-invariant closed subspace $E\se L^\infty(G)$. Moreover, a lifting argument allows us to choose a representative $\widetilde A$ of $A$ with good properties. Summing up, what all this means is that we have a map $\widetilde A\colon G/H\to E^*$ (everywhere defined) such that

\begin{enumerate}[label=(\arabic*)]
\item for all $f\in E$, the map $x\mapsto \langle \widetilde A(x), f\rangle$ is measurable,
\item $\langle \widetilde A(x), \one_G\rangle = 1$ for a.e.~$x$,
\item for any $g\in G$ and any $f\in E$ we have $\langle  \widetilde A(g x), f\rangle = \langle  \widetilde A(x), g f\rangle$ for a.e.~$x$.
\end{enumerate}

In contrast to other proofs above, one cannot continue the argument by applying ergodicity to the new map $x\mapsto g  \widetilde A(g\inv x)$ since the action on $E^*$ is not weak-* measurable unless $G$ is discrete.

\subsection{A structure result}\label{sec:structure}
Finally, we present a structure result for certain amenable actions on locally compact spaces which is related to relative amenability as follows. In an earlier stage of this work, we realised that relative amenability coincides with amenability as soon as $G$ admits an amenable action on a locally compact space $X$ such that the $G$-representation on $\cb(X)$ is continuous (for the sup-norm). There are two obvious examples where this continuity holds:

\smallskip
\begin{enumerate}[label=(\arabic*)]
\item When $X$ is compact: in this case, amenability of the action implies that $G$ is amenable at infinity.
\item When $G$ is discrete. This extends immediately to the case where $G$ is amenable-by-discrete.
\end{enumerate}

\smallskip
The theorem below shows that in fact these two cases are the only ones. In particular, any group $G$ admitting such an action belongs to the class~$\cx$.

\begin{thm}\label{thm:decompose}
Let $G$ be any $\sigma$-compact locally compact group. The following are equivalent:
\begin{enumerate}[label=(\roman*)]
\item $G$ admits an amenable continuous action on a locally compact space $X$ such that the $G$-representation on $\cb(X)$ is continuous.
\item $G$ is either amenable at infinity or amenable-by-discrete.
\end{enumerate}
\end{thm}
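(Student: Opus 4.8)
The plan is to handle the two implications separately, (i)$\Rightarrow$(ii) being the substantial one.

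\emph{Direction} (ii)$\Rightarrow$(i). I would simply exhibit the two announced models. If $G$ is amenable at infinity, take a compact space $Z$ carrying a continuous amenable $G$-action and set $X=Z$: since $Z$ is compact, the left $G$-representation on $\cb(Z)=\cc(Z)$ is automatically norm-continuous. If $G$ is amenable-by-discrete, write $A\lhd G$ for an amenable closed normal subgroup with $G/A$ discrete (so that $A$ is open), and take $X=G/A$ with the left translation action. The point stabilisers are the conjugates of $A$, hence amenable, so the $G$-action on the discrete space $G/A$ is topologically amenable (the translation action on a discrete homogeneous space is amenable precisely when the point stabiliser is; see e.g.\ \cite{Anantharaman-Renault}). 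Moreover $A$ acts trivially on $G/A$ because it is normal, so the $G$-representation on $\cb(G/A)$ factors through the \emph{discrete} group $G/A$ and is a fortiori continuous. Thus $X=G/A$ witnesses~(i).

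\emph{Direction} (i)$\Rightarrow$(ii). Let $G\acts X$ be amenable and continuous with $G\acts\cb(X)$ norm-continuous. I would begin with two reductions. The kernel of the action is the intersection of all point stabilisers, each of which is amenable because the action is amenable; hence that kernel is an amenable closed normal subgroup, and since both alternatives in~(ii) are stable under passing to such quotients (using Theorem~\ref{thm:closure}\ref{pt:class:ameneq} for ``amenable-by-discrete'', and the stability of amenability at infinity under extensions with amenable kernel, cf.\ \cite{Anantharaman-Renault}), we may assume the action effective. Next, by Kakutani--Kodaira, exactly as in the proof of Theorem~\ref{thm:classic}, we may assume $G$ second countable, quotienting by a compact normal subgroup and passing to the induced action on the corresponding quotient space. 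The key observation is then that norm-continuity of $G\acts\cb(X)$ means precisely that the $G$-action on $X$ extends to a \emph{continuous} action of $G$ on the Stone--\v Cech compactification $\beta X$ (for a compact space $Y$, strong continuity of the representation on $\cc(Y)$ is equivalent to continuity of $G\acts Y$). As $X$ is locally compact it is open in $\beta X$, so $\partial X:=\beta X\setminus X$ is a compact $G$-invariant subset. Now amenability of an action on a compact space localises along open invariant subsets: $G\acts\beta X$ is amenable if and only if both $G\acts X$ and $G\acts\partial X$ are (see \cite{Anantharaman-Renault}). Since $G\acts X$ is amenable by hypothesis, this gives: if $X$ is compact, or more generally if $G\acts\partial X$ is amenable, then $G\acts\beta X$ is an amenable action on a compact space, so $G$ is amenable at infinity.

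It remains to prove that if $G\acts\partial X$ is \emph{not} amenable then $\Ramen(G)$ is open, i.e.\ $G$ is amenable-by-discrete; this is the step I expect to be the main obstacle. The idea is to exploit the uniformity hidden in the hypothesis: for every $f\in\cb(X)$ one has $\|gf-f\|_\infty\to 0$ as $g\to e$, which says that $G\acts X$ is equicontinuous at $e$ for the Samuel uniformity of $X$, not merely pointwise. Testing this against indicator-type functions of large compacta translates it into the statement that, outside a compact core, the $G$-orbits in $X$ are ``uniformly spread apart''. One then argues that the orbit closures of $\Ramen(G)$ in the amenable action $G\acts X$ cannot account for the failure of amenability on $\partial X$ — this is essentially the content of Proposition~\ref{prop:moy-convexe} and Corollary~\ref{cor:moduloamen} — so the complement of $\Ramen(G)$ must act on $X$ properly enough that $\Ramen(G)$ acquires nonempty interior. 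Turning this heuristic into a proof will require the structure theory of locally compact groups already used in the paper (van Dantzig's theorem, the solution of Hilbert's fifth problem, and Theorem~11.3.4 of \cite{Monod_LN}) in order to reduce to the compactly generated totally disconnected case and then to identify $\Ramen(G)$; the bookkeeping there, rather than any single conceptual leap, is where I expect the real work to lie.
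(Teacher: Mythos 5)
Your direction (ii)$\Rightarrow$(i) matches the paper's and is fine. The problem is that in (i)$\Rightarrow$(ii) you stop exactly where the theorem actually lives. The paper's proof rests on a purely topological dichotomy (Proposition~\ref{prop:decompose}): under mild countability hypotheses, norm-continuity of the $G$-representation on $\cb(X)$ \emph{forces} the existence of an open subgroup $O<G$ preserving a compact set $K\se X$ and acting trivially outside $K$. This is proved by a concrete diagonal construction: if no such $O$ exists, one inductively picks $g_n\to e$ and points $x_n$ escaping to infinity with $g_nx_n\neq x_n$, and glues (via Tietze) a single continuous $f\colon X\to[0,1]$ with $f(x_n)=0$, $f(g_nx_n)=1$, contradicting $\|g_nf-f\|_\infty\to 0$. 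Once one has $O$, the non-compact case is finished softly: finitely many conjugates $gO g\inv$ each act trivially off a compact set, so (as $X$ is non-compact) they have a common fixed point, whose stabiliser is amenable because the action is amenable; hence the normal closure of $O$ is amenable and open, i.e.\ $G$ is amenable-by-discrete. Your proposal contains no substitute for this. The step ``if $G\acts\partial X$ is not amenable then $\Ramen(G)$ is open'' is exactly the hard part, and what you offer for it --- testing continuity against ``indicator-type functions of large compacta'' (which are not continuous, so cannot be tested directly), orbits being ``uniformly spread apart'', $\Ramen(G)$ ``acquiring nonempty interior'' --- is a heuristic, not an argument; you say so yourself. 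Note also that the relevant dichotomy is not governed by amenability of the boundary action on $\beta X\setminus X$ but by the structural alternative of Proposition~\ref{prop:decompose} (equivalently, by whether $X$ is compact), and that your appeal to closure of amenability at infinity under extensions with amenable kernel, used in your first reduction, deserves a precise reference --- the authors themselves flag in Section~\ref{sec:exo} that closure of this class under extensions is delicate. As it stands the proposal proves (ii)$\Rightarrow$(i) and the compact-boundary branch of (i)$\Rightarrow$(ii), but not the theorem.
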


This criterion relies on the following characterisation of the continuity of the representation on $\cb(X)$. The reformulation in terms of the Stone--\v{C}ech compactification $\beta X$ is a matter of definitions, whilst (a)$\Longrightarrow$(b) is the core of the statement.

\begin{prop}\label{prop:decompose}
Let $G$ be a locally compact group acting continuously on a locally compact space $X$. We assume that $X$ is $\sigma$-compact and that $G$ has a countable basis of identity neighbourhoods. The following are equivalent:
\begin{enumerate}
\item[(a)] The $G$-representation on $\cb(X)$ is continuous (for the sup-norm).
\item[(a')] The $G$-action on $\beta X$ is continuous.
\item[(b)] There is an open subgroup $O<G$ preserving a compact set $K\se X$ and acting trivially outside $K$.
\end{enumerate}
\end{prop}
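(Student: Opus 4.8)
The plan is to treat (a)$\Leftrightarrow$(a') and (b)$\Rightarrow$(a) as soft points and to put all the work into (a)$\Rightarrow$(b). For the first: $\cb(X)$ is canonically $C(\beta X)$, and the $G$-action on $X$ extends functorially to an action on $\beta X$ by homeomorphisms, so it suffices to recall that for a compact Hausdorff space $Y$ with a $G$-action by homeomorphisms, joint continuity of $G\times Y\to Y$ is equivalent to norm-continuity of the representation on $C(Y)$. (Joint continuity $\Rightarrow$ norm-continuity: for $f\in C(Y)$ and $\vareps>0$, cover $Y$ by finitely many product neighbourhoods of points $(e,y)$ on which $|f(g\inv y')-f(y)|<\vareps$; the intersection of the corresponding neighbourhoods of $e$ works. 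Conversely, norm-continuity of all orbit maps forces $(g_i.f)(y_i)\to(g.f)(y)$ for every $f$, hence $g_i\inv y_i\to g\inv y$, since $C(Y)$ separates the points of the compact space $Y$.) Taking $Y=\beta X$ yields (a)$\Leftrightarrow$(a'). For (b)$\Rightarrow$(a): each $g$ acts isometrically on $\cb(X)$, so it is enough to prove continuity of each orbit map at $e$, and one may restrict to the open subgroup $O$; there $(u.f)(x)=f(x)$ for $x\notin K$, so $\|u.f-f\|_\infty=\sup_{x\in K}|f(u\inv x)-f(x)|$, which is made uniformly small near $e$ by the same compactness argument, applied now to the continuous function $(u,x)\mapsto f(u\inv x)$ on $O\times K$ with $K$ compact.

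For (a)$\Rightarrow$(b) I would argue as follows. Fix a compact exhaustion $K_1\se K_2\se\cdots$ of $X$ with $K_n\se\mathrm{int}(K_{n+1})$ and $\bigcup_nK_n=X$ (possible since $X$ is $\sigma$-compact and locally compact). For compact $L\se X$ set $G_{(L)}=\{g\in G:gx=x\text{ for all }x\notin L\}$; this is a closed subgroup acting trivially outside $L$, hence also preserving $L$. Let $G_{\mathrm{fin}}=\bigcup_nG_{(K_n)}$, the subgroup of elements whose moving set $\{x:gx\neq x\}$ is relatively compact. The argument has two steps, and Step~2 is easy: \emph{if} $G_{\mathrm{fin}}$ is open, then it is itself a locally compact, hence Baire, group, and it is the countable increasing union of the closed subgroups $G_{(K_n)}$; by Baire one of them has nonempty interior, hence is open (a subgroup with an interior point is open), and then $O=G_{(K_n)}$ with $K=K_n$ witnesses (b). So everything comes down to Step~1: proving that $G_{\mathrm{fin}}$ is open.

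For Step~1 I would argue by contradiction. If $G_{\mathrm{fin}}$ is not open, then using a decreasing countable neighbourhood basis $V_1\supseteq V_2\supseteq\cdots$ of $e$ one can pick $g_n\in V_n\setminus G_{\mathrm{fin}}$, so $g_n\to e$ while each moving set $M_n=\{x:g_nx\neq x\}$ (open, and contained in no compact set) escapes every compact set. I would then build recursively points $x_n\in M_n$ and relatively compact open sets $W_n\ni x_n$ with the closures $\overline{W_n}$ pairwise disjoint, each $\overline{W_n}$ disjoint from $K_n$, with $x_n\notin\overline{W_m}$ for $m\neq n$, and crucially with $g_nx_n\notin\overline{W_m}$ for \emph{every} $m$. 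At stage $n$, with $W_1,\dots,W_{n-1}$ chosen and $\Gamma=\overline{W_1}\cup\cdots\cup\overline{W_{n-1}}$, the set $\Gamma\cup g_n\inv(\Gamma)\cup K_n\cup\{x_j,g_jx_j:j<n\}$ is compact, so $M_n$ meets its complement; choosing $x_n$ there gives $x_n\neq g_nx_n$, $x_n\notin\Gamma\cup K_n$ and $g_nx_n\notin\Gamma$, after which local compactness and the Hausdorff property provide $W_n\ni x_n$ with $\overline{W_n}$ compact and disjoint from $\Gamma\cup K_n\cup\{x_j:j<n\}\cup\{g_jx_j:j\le n\}$. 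Since $\overline{W_n}\cap K_n=\emptyset$, the family $\{\overline{W_n}\}$ is locally finite; picking $f_n\in C_c(X)$ with $0\le f_n\le\one$, $f_n(x_n)=1$ and $\mathrm{supp}(f_n)\se W_n$, the sum $f=\sum_nf_n$ is therefore continuous (locally a finite sum) and bounded by $1$ (disjoint supports), so $f\in\cb(X)$. By construction $f(x_n)=1$ and $f(g_nx_n)=0$, whence
\[
\|g_n.f-f\|_\infty\ \ge\ \big|(g_n.f)(g_nx_n)-f(g_nx_n)\big|\ =\ \big|f(x_n)-f(g_nx_n)\big|\ =\ 1
\]
for all $n$; as $g_n\to e$, this contradicts the norm-continuity of the orbit map of $f$ at $e$, so $G_{\mathrm{fin}}$ is open, completing the proof.

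The hard part is exactly this recursive construction in Step~1: one must place the bumps $f_n$ so that each displaced point $g_nx_n$ misses \emph{all} of them simultaneously. This is where the local compactness of $X$ is used (to shrink $W_n$ to a relatively compact neighbourhood of $x_n$ disjoint from a prescribed compact set) and where the hypothesis that every $M_n$ escapes every compact set is essential (it lets one still find $x_n$ after having also deleted the extra compact ``shadow'' $g_n\inv(\Gamma)$). The $\sigma$-compactness of $X$ enters only through the exhaustion and the resulting local finiteness of $\{\overline{W_n}\}$, and the first countability of $G$ only through the choice of the sequence $(g_n)$; everything else is formal.
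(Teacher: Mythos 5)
Your proof is correct. Its engine --- contradicting (a) by producing $g_n\to e$ and points $x_n$ with $g_nx_n\neq x_n$, both escaping every compact set, separated by a single function in $\cb(X)$ --- is also the engine of the paper's proof of (a)$\Rightarrow$(b), but you package it genuinely differently. The paper negates (b) directly and runs one induction: at stage $n$ the failure of (b) for the open subgroup generated by a compact identity neighbourhood $U_n$ yields $g_n\in U_n$ moving a point $x_n$ outside the current compact set to another point outside it, and the separating function is obtained as a limit of Tietze extensions (uniform on compacta). You instead introduce the subgroup $G_{\mathrm{fin}}$ of elements with relatively compact moving set, show it is open via your locally finite sum of disjointly supported Urysohn bumps (a clean substitute for the Tietze limit), and then apply Baire category to the countable increasing union $G_{\mathrm{fin}}=\bigcup_n G_{(K_n)}$ of closed subgroups to extract a single open $G_{(K_n)}$. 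This buys you a weaker statement to contradict in the analytic step (you only need each $g_n$ to have non-relatively-compact moving set, not the full negation of (b)), and it spares you the small combinatorial point, implicit in the paper, that if the group generated by $U_n$ moves some point outside $K\cup U_n\inv K$ then a single element of $U_n$ already moves a point outside $K$ to a point outside $K$; the price is the extra Baire step. For (b)$\Rightarrow$(a) you argue directly by uniform continuity over the compact set $K$, whereas the paper routes through joint continuity on $\beta X$; and your (a)$\Leftrightarrow$(a') is the same standard identification $\cb(X)\cong\cc(\beta X)$ the paper uses. All steps check out.
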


\begin{proof}[Proof of Proposition~\ref{prop:decompose}]
(a)$\Longrightarrow$(b). Let $\{U_n\}_{n\in\NN}$ be  a basis of identity neighbourhoods in $G$ and $\{C_n\}_{n\in\NN}$ a sequence of compact subsets of $X$ whose union covers $X$. Suppose~(b) fails. We can assume that each $U_n$ is compact. We construct inductively $g_n\in G$, $x_n\in X$, a compact subset $K_n\se X$ and a continuous function $f_n\colon X\to [0,1]$ as follows, with $g_0$, $x_0$, $K_0$ and $f_0$ arbitrary. Let $n\geq 1$. Define $K_n= C'_{n-1} \cup K_{n-1}\cup \{x_{n-1}, g_{n-1}x_{n-1}\}$, where $C'_{n-1}$ is any compact neighbourhood of $C_{n-1}$. By assumption, the subgroup generated by $U_n$ must move some point outside the compact set $K_{n-1}\cup U_n\inv K_{n-1}$. We can thus choose $x_n$ and $g_n$ such that $x_n\notin K_{n-1}$, $g_n\in U_n$, $g_n x_n\neq x_n$ and $g_n x_n \notin K_{n-1}$. Applying Tietze's extension theorem to the compact set $K_{n-1}\cup\{x_n, g_n x_n\}$ we obtain a continuous function $f_n\colon X\to [0,1]$ which coincides with $f_{n-1}$
  on $K_{n-1}$ and satisfies $f_n(x_n)=0$, $f_n(g_n x_n)=1$.

\smallskip
Since the sequence $K_n$ is increasing and covers $X$, the sequence $f_n$ converges to a function $f\colon X\to [0,1]$. In fact, the convergence is uniform on compact subsets since $K_n$ contains a neighbourhood of $C_j$ for $j<n$; therefore $f$ is continuous. However, $f(x_n)=0$ and $f(g_n x_n)=1$ for all $n\geq 1$ even though $g_n$ converges to the identity, contradicting~(a).

\medskip
(b)$\Longrightarrow$(a'). Since $G$ acts by homeomorphisms on $\beta X$, it suffices to show that the $O$-action on $\beta X$ is continuous. In fact it is enough to consider the action on $\beta X\setminus K$ since this is a neighbourhood of $\beta X\setminus X$. The latter action is trivial since $\beta X\setminus K$ is in the closure of $X\setminus K$ in $\beta X$.

\medskip
(a')$\Longrightarrow$(a). For any compact continuous $G$-space $Z$, the $G$-representation on $\cc(Z)$ is continuous for the sup-norm. Therefore this implication follows from the natural identification $\cb(X)\cong\cc(\beta X)$.
\end{proof}

\begin{proof}[Proof of Theorem~\ref{thm:decompose}]
(ii)$\Longrightarrow$(i). If $G$ is amenable at infinity, then by definition it acts amenably on a compact space $X$; we recall that the action on $\cb(X)=\cc(X)$ is continuous by compactness. If $G$ has an amenable open normal subgroup $A\triangleleft G$, we let $X=G/A$.

\medskip
(i)$\Longrightarrow$(ii). If $X$ is compact, then $G$ is amenable at infinity. We assume henceforth that $X$ is non-compact and proceed to construct an amenable open normal subgroup $A\triangleleft G$.

Since $G$ is $\sigma$-compact, there is by~\cite{KakutaniKodaira} a compact normal subgroup $N\triangleleft G$ such that $G/N$ is second countable. The quotient $X/N$ is a locally compact space with a continuous $G/N$-action. Moreover, the amenability, non-compactness and continuity on $\cb(X/N)$ still all hold --- the latter because of the canonical inclusion $\cb(X/N)\se \cb(X)$. A standard procedure provides a second countable equivariant quotient $X/N\twoheadrightarrow Y$ which still retains all these properties. (This consists in taking $Y$ to be the spectrum of a separable $G$-invariant $C^*$-subalgebra of $\cc_0(X)$ large enough to define the sequence of maps in the definition of topologically amenable actions~\cite{Anantharaman02}; a countable sequence suffices since $G$ is second countable.) Now Proposition~\ref{prop:decompose} provides an open subgroup $O<G/N$ acting trivially outside some compact subset of $Y$. The pre-image $O'$ of $O$ in $G$ is open and we claim that the normal subgroup $A$ of $G$ normally generated by $O'$ is amenable; this will complete the proof.

To this end, is suffices to show that for any finite set $F\se G$, the subgroup of $G$ generated by $\bigcup_{g\in F}g O' g\inv$ is amenable. Since $Y$ is non-compact, the groups $g O g\inv$ have a common fixed point. Now we conclude since the stabiliser of any point in an amenable action is an amenable subgroup.
\end{proof}



\bigskip
\bibliographystyle{amsalpha}
\bibliography{../IsomCAT0}

\end{document}